\newtheorem{theorem}{Theorem}
\newtheorem{corollary}[theorem]{Corollary}
\theoremstyle{definition}
\newtheorem{definition}{Definition}[section]
\theoremstyle{definition}
\newtheorem{remark}{Remark}[section]
\newtheorem{lemma}[theorem]{Lemma}
\newtheorem{proposition}[theorem]{Proposition}
\begin{document}

\title{{\LARGE CLOSED CONVEX SETS OF MOTZKIN AND GENERALIZED MINKOWSKI TYPES}%
}
\author{\textsc{JUAN ENRIQUE MART\'{I}NEZ LEGAZ}\thanks{%
Corresponding author}\thanks{%
This author was supported by the Ministerio~de Ciencia, Innovaci\'{o}n~y
Universidades~[PGC2018-097960-B-C21] and the Severo Ochoa Programme for
Centres of Excellence in R\&D [SEV-2015-0563]. He is affiliated with MOVE
(Markets, Organizations and Votes in Economics).} \\
Dept. of Economics and Economic History\\
Universitat Aut\`{o}noma de Barcelona,\\
and BGSMath\\
Spain \and \textsc{CORNEL PINTEA}\thanks{%
This author was supported by Ministerio~de Ciencia, Innovaci\'{o}n~y
Universidades~[PGC2018-097960-B-C21] and by AGC31333/23.03.2022 and
AGC31334/23.03.2022.} \\
Babe\c{s}-Bolyai University\\
Faculty of Mathematics and Computer Science\\
Cluj-Napoca\\
Romania}
\date{\textit{Dedicated to Francis Clarke on the occasion of his 75th
birthday}}
\maketitle

\begin{abstract}
\noindent The aim of this paper is twofold. On one hand the generalized
Minkowski sets are defined and characterized. On the other hand, the Motzkin
decomposable sets, along with their epigraphic versions are considered and
characterized in new ways. Among them, the closed convex sets with one
single minimal face, i.e. translated closed convex cones, along with their
epigraphic counterparts are particularly studied.
\end{abstract}

\noindent \emph{Keywords:} Closed convex sets, minimal and lowest
dimensional faces, epigraphs.

\noindent \emph{2020 Mathematics Subject Classification:} 52A20;26B25

\section{Introduction}

Every compact convex subset of the Euclidean space is, according to the
Minkowski theorem \cite{Mi}, the convex hull of the set of its extreme
points (see also \cite[p.52]{Barvinok} and \cite[Theorem 2.7.2]{BoVa}).
However, the class of all closed convex subsets that are representable as
their own sets of extreme points is significantly much larger and we called
them \emph{Minkowski sets}. Indeed, there are many unbounded Minkowski sets
and we characterized them in \cite{MaLe-Pi}. For such subsets the extreme
points are obviously minimal and lowest dimensional faces as well. In this
paper we first prove that a face of a closed convex subset of the Euclidean
space is minimal if and only if it is lowest dimensional. Such faces are
also shown to be lineality invariant, and several other properties of them
are proved. We next enlarge the class of Minkowski sets to the one of \emph{%
generalized Minkowski sets}, which are intensively studied here. This latter
class consists of all closed convex subsets of the Euclidean space that are
representable as the convex hull of the union of its minimal (lowest
dimensional) faces.

The paper is organized as follows: In the second section we prove several
characterizations of the lowest dimensional faces of a closed convex subset
of the Euclidean space. This is done by proving several helpful statements
before. The equivalence between the minimality of a face of a closed convex
set and its quality to be lowest dimensional appears as a corollary
afterwards. The third section is devoted to new characterizations of Motzkin
decomposable sets and their epigraphic versions. Note that several
characterizations of Motzkin decomposable sets were done before by Goberna,
Martinez-Legaz and Todorov \cite{Go-MaLe-To}. The particular case of
translated closed convex cones is also studied by highlighting their
property to have one single minimal face. Actually, this property
characterizes closed convex cones. Finally, in the last section an important
class of generalized Minkowski sets is highlighted, namely that consisting
of those closed convex sets whose Pareto like sets cover their relative
boundaries. \label{intro}

\section{Definitions and preliminary results}

We consider the closed convex subsets of $\mathbb{R}^{n}$ that can be
represented as the convex hull of their lowest dimensional faces. The
Minkowski sets (see \cite{MaLe-Pi}) are obviously particular examples of
such closed convex sets. On the other hand, every Minkowski set $C$ can
produce extra sets with the above property simply by considering $C+U$,
where $U$ is a suitable subspace. We will prove that, in fact, every set
with the above property is the sum of a Minkwoski set with a subspace.

Thoroughout the whole paper, $0^{+}C:=\{y\in \mathbb{R}^{n}:y+C\subseteq C\}$
stands for the \emph{recession cone} of a closed convex set $C$. Recall that 
$0^{+}C$ is a convex cone \cite[Theorem 18.1]{R70} and $\mathrm{lin}%
~C:=0^{+}C\cap (-0^{+}C)$ is a subspace of $\mathbb{R}^{n}$ called \emph{the
lineality} of $C$.

Recall that the \emph{polar con}e of a set $S\subseteq \mathbb{R}^{n}$ is%
\begin{equation*}
S^{0}:=\left\{ x^{\ast }\in \mathbb{R}^{n}:\left\langle x^{\ast
},x\right\rangle \leq 0\text{\qquad }\forall x\in S\right\} ,
\end{equation*}%
and the \emph{barrier cone} $\mathrm{barr}(C)$ of $C$ consists of those
vectors $x^{\ast }\in \mathbb{R}^{n}$ for which there exists $\alpha
_{x^{\ast }}\in \mathbb{R}$ with the property $\langle c,x^{\ast }\rangle
\leq \alpha _{x^{\ast }}$, for every $c\in C$. In other words,%
\begin{equation*}
\mathrm{barr}(C):=\left\{ x^{\ast }\in \mathbb{R}^{n}\ |\ \sup_{c\in
C}\langle c,x^{\ast }\rangle <\infty \right\} .
\end{equation*}%
It is well known that $\mathrm{barr}(C)$ is a convex cone and $[\mathrm{barr}%
(C)]^{\circ }=0^{+}C$.

\begin{definition}
A vector $x^*$ is said to be \emph{normal} to a convex set $C\subseteq%
\mathbb{R}^{n}$ at a point $x\in C$ if $\langle c-x,x^*\rangle\leq 0$, for
all $c\in C$. The set of vectors normal to $C$ at $x$ is a closed convex
cone denoted by $N_C(x)$ and is called the \emph{normal cone} to $C$ at $x$.
\end{definition}

Note that the normal cone to $C$ at every interior point of $C$ reduces to $%
\{0\}$. We extend the, possibly multivalued, mapping $N_{C}:C%
\rightrightarrows \mathbb{R}^{n}$ to the whole space $\mathbb{R}^{n}$ by
setting $N_{C}(x):=\emptyset $ whenever $x\in \mathbb{R}^{n}\setminus C$.
Denote by $N_{C}(\mathbb{R}^{n})$ the range of $N_{C}$, i.e. the union of
the normal cones to $C$ at all points of $C,$ and call it the \emph{total
normal cone} of $C$. Note that the total normal cone need not be convex.

For a lower semicontinuous proper convex function $f:\mathbb{R}%
^{n}\rightarrow \mathbb{R\cup }\left\{ +\infty \right\} ,$ one has $\partial
f^{\ast }=(\partial f)^{-1}$, i.e. 
\begin{equation}
x^{\ast }\in \partial f(x)\Leftrightarrow x\in \partial f^{\ast }(x^{\ast }),
\label{equivalence}
\end{equation}%
where $f^{\ast }$ stands for the conjugate function of $f$ and $\partial
f(x) $ for the subdifferential of $f$ at $x$. Recall that $f^{\ast }$ and $%
\partial f$ are defined by 
\begin{align*}
& f^{\ast }(x^{\ast }):=\sup_{x}\{\langle x,x^{\ast }\rangle -f(x)\}, \\
& \partial f(x):=\{x^{\ast }\ |\ f(y)\geq f(x)+\langle x^{\ast },y-x\rangle
,\forall y\}.
\end{align*}

For example, the support function $\sigma _{_{\!C}}$ of $C$ is the conjugate 
$\delta _{_{\!C}}^{\ast }$ of the indicator function $\delta _{_{\!C}}$ of $%
C $. Recall that these two functions are defined by

\begin{align*}
& \sigma _{_{\!C}}:\mathbb{R}^{n}\longrightarrow \mathbb{R}^{n},\ \sigma
_{_{\!C}}(x^{\ast }):=\sup_{_{\!c\in C}}\langle c,x^{\ast }\rangle , \\
& \delta _{_{\!C}}(c):=\left\{ 
\begin{array}{ll}
0 & \mbox{ if }c\in C \\ 
+\infty & \mbox{ if }c\not\in C.%
\end{array}%
\right.
\end{align*}%
We also observe that 
\begin{equation*}
\mathrm{dom}(\sigma _{_{\!C}}):=\{x^{\ast }\in \mathbb{R}^{n}\ |\ \sigma
_{_{\!C}}(x^{\ast })<+\infty \}=\mathrm{barr}(C).
\end{equation*}%
and 
\begin{equation*}
\mathrm{dom}(\partial \sigma _{_{\!C}}):=\{x^{\ast }\in \mathbb{R}^{n}\ |\
\partial \sigma _{_{\!C}}\neq \emptyset \}=N_{C}(\mathbb{R}^{n}),
\end{equation*}%
as 
\begin{equation*}
\partial \delta _{_{\!C}}=N_{C};
\end{equation*}%
moreover, as a consequence of (\ref{equivalence}), if $C$ is convex and
closed, then 
\begin{equation*}
x^{\ast }\in N_{C}(x)\Leftrightarrow x\in \partial \sigma _{_{\!C}}(x^{\ast
}).
\end{equation*}

\begin{theorem}
\label{prop05.06.17.1}Let $C\subseteq \mathbb{R}^{n}$ be a nonempty closed
convex set, $U\subseteq \mathbb{R}^{n}$ be a supplementary subspace to $%
\mathrm{lin}~C$ (i.e., a linear subspace such that $U\oplus \mathrm{lin}~C=%
\mathbb{R}^{n}$), and $F$ be a nonempty face of $C$. The following
statements are equivalent:

\begin{enumerate}
\item $F$ is lowest dimensional.\label{st05.06.17.1}

\item $F$ is an affine variety.\label{st05.06.17.2}

\item $F=x+\mathrm{lin}~C$ for some $x\in C.$\label{st05.06.17.3 copy(1)}

\item $F\cap U$ is a singleton.\label{st05.06.17.4}

\item $F\cap U$ is the singleton of an extreme point of $C\cap U.$\label%
{st05.06.17.5}

\item $F=x+\mathrm{lin}~C$ for some $x\in \mathrm{ext}\left( C\cap U\right) $%
.\label{st05.06.17.6}
\end{enumerate}
\end{theorem}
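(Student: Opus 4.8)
The plan is to reduce the entire statement to the line-free ``cross-section'' $C\cap U$ together with a dimension count. Write $L:=\mathrm{lin}\,C$ and $d:=\dim L$. The first step is the structural observation that every nonempty face absorbs $L$: if $F$ is a face and $\ell\in L$, then for $x\in F$ both $x+\ell$ and $x-\ell$ lie in $C$ (since $L\subseteq 0^{+}C\cap(-0^{+}C)$), and $x=\tfrac12(x+\ell)+\tfrac12(x-\ell)$ forces $x\pm\ell\in F$ by the defining property of a face. Hence $F=F+L$, so $\dim F\ge d$ for every nonempty face $F$. Combining $F=F+L$ with the direct sum $U\oplus L=\mathbb{R}^{n}$ and the uniqueness of the decomposition $y=y_{U}+y_{L}$ gives $F=(F\cap U)+L$; applied to $C$ itself (using $C=C+L$) it gives $C=(C\cap U)+L$, and a short recession-cone computation shows $\mathrm{lin}(C\cap U)=L\cap U=\{0\}$, i.e.\ $C\cap U$ is a nonempty line-free closed convex set.

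The second step is a dimension-additive correspondence between faces. I would check that $F\mapsto F\cap U$ sends faces of $C$ to faces of $C\cap U$ (by intersecting the defining inequality with $U$), that $G\mapsto G+L$ sends faces of $C\cap U$ back to faces of $C$ (by splitting points into their $U$- and $L$-components and invoking that $G$ is a face of $C\cap U$), and that these maps are mutually inverse with $\dim(G+L)=\dim G+d$. Since $C\cap U$ is line-free and nonempty it possesses an extreme point, so its minimal face dimension is $0$ and a face of $C\cap U$ is lowest dimensional iff it is a singleton iff it is an extreme point. Transporting through the correspondence, the minimal face dimension of $C$ is exactly $d$, and $F$ is lowest dimensional iff $F\cap U$ is a singleton extreme point of $C\cap U$.

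With these tools the equivalences fall out quickly. For $(1)\Leftrightarrow(3)$: if $\dim F=d$ then, as $x+L\subseteq F$ for $x\in F$, the affine hull of $F$ equals $x+L$, whence $F=x+L$; conversely $F=x+L$ has dimension $d$, the minimum. For $(2)\Leftrightarrow(3)$: an affine face $F=x+V$ satisfies $V\subseteq L$ (every direction of $V$ generates a full line in $C$) and $L\subseteq V$ (from $F=F+L$), so $V=L$. For $(3)\Leftrightarrow(4)$: from $F=(F\cap U)+L$, the equality $F=x+L$ is equivalent to $F\cap U$ being a single point. For $(4)\Leftrightarrow(5)$: a singleton $F\cap U$ is a face of $C\cap U$, and a singleton face is precisely an extreme point. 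Finally $(5)\Leftrightarrow(6)$ follows by combining with $(3)$, since the extreme point in $(5)$ is exactly the $x$ of $F=x+L$.

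The only genuinely delicate point is the ``lowest dimensional'' clause: one must rule out the possibility that $C$ has no face of dimension $d$ at all, which is why the existence of an extreme point of the line-free section $C\cap U$ is doing real work. I would be careful to derive this from the standard fact that a nonempty line-free closed convex set has an extreme point, rather than from the minimal-face$/$lowest-dimensional equivalence, which is obtained only as a corollary of the present theorem and so cannot be invoked here. The remainder is routine verification that the face property is preserved under the operations $\cap\,U$ and $+\,L$.
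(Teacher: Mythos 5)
Your proof is correct, but it reaches the one genuinely nontrivial point --- that a lowest dimensional face must be an affine variety --- by a different route than the paper. The paper's engine is facial-topological: after proving $\mathrm{lin}\,F=\mathrm{lin}\,C$ for every nonempty face (Proposition \ref{rem19.08.17}, your ``$F$ absorbs $L$'' step), Lemma \ref{Lemma25.02.15.1} argues that a lowest dimensional face can have no proper faces, hence has empty relative boundary (the relative boundary of a convex set being covered by its proper faces), hence equals its affine hull by connectedness of $\mathrm{aff}\,F$; this yields $F=x+\mathrm{lin}\,C$ without ever needing to know beforehand that a face of dimension $\dim\mathrm{lin}\,C$ exists, and the extreme-point statement (5) then falls out as a by-product rather than being an input. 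Your engine is instead the inclusion-preserving, mutually inverse, dimension-shifting correspondence $F\mapsto F\cap U$, $G\mapsto G+\mathrm{lin}\,C$ between nonempty faces of $C$ and faces of the line-free section $C\cap U$, combined with the imported theorem that a nonempty line-free closed convex set has an extreme point (Rockafellar, Corollary 18.5.3), which pins the minimal face dimension of $C$ at exactly $d=\dim\mathrm{lin}\,C$; the dimension count $x+\mathrm{lin}\,C\subseteq F$, $\dim F=d$, hence $F=\mathrm{aff}\,F=x+\mathrm{lin}\,C$, then closes the argument. You were right to flag the extreme-point existence theorem as the load-bearing input in your scheme, and right that the minimal-face corollary cannot be invoked (that would be circular). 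The trade-off: the paper avoids the existence theorem entirely but leans on Webster's results on proper faces covering the relative boundary and on ``empty relative boundary implies affine variety''; your approach needs that extra existence input, but the face correspondence is a stronger, reusable tool --- restricted to singleton faces it is precisely the content of Remark \ref{min faces}, it yields Corollary \ref{LDF} immediately, and it in fact exhibits an isomorphism of the full face lattices of $C$ and $C\cap U$, which the paper never makes explicit.
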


\begin{remark}
\label{min faces}In fact, 6 implies that $F$ is a face of $C.$ To see this,
we consider two points $p,q\in C$ such that $(1-t)p+tq\in x+\mathrm{lin}~C$
for some $t\in (0,1)$ and the decomposition $p=p_{U}+p_{l}$ and $%
q=q_{U}+q_{l}$, where $p_{U},q_{U}\in U$ and $p_{l},q_{l}\in \mathrm{lin}~C$%
, due to the direct sum decomposition $U\oplus \mathrm{lin}~C=\mathbb{R}^{n}$%
. Note that $p_{U}=p-p_{l},\ q_{U}=q-q_{l}\in C+\mathrm{lin}~C=C$, namely $%
p_{U},q_{U}\in C\cap U,$ and therefore 
\begin{equation*}
(1-t)p_{U}+tq_{U}=(1-t)p+tq-((1-t)p_{l}+tq_{l})\in x+\mathrm{lin}~C-\mathrm{%
lin}~C=x+\mathrm{lin}~C.
\end{equation*}%
Consequently,%
\begin{equation*}
(1-t)p_{U}+tq_{U}\in C\cap U\cap (x+\mathrm{lin}~C)\subseteq U\cap (x+%
\mathrm{lin}~C)=\{x\}
\end{equation*}%
(the latter equality following from the fact that $x\in U$), which shows
that $p_{U},q_{U}=x$, as $x$ is an extreme point of $C\cap U$. Thus%
\begin{equation*}
p=x+p_{l}\text{, }q=x+q_{l}\in x+\mathrm{lin}~C,
\end{equation*}%
which completes the proof that $F$ is a face of $C$.
\end{remark}

\begin{corollary}
A face $F$ of the closed convex set $C\subseteq \mathbb{R}^{n}$ is lowest
dimensional if and only if $F$ is minimal.
\end{corollary}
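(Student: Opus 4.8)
The plan is to deduce the corollary from Theorem~\ref{prop05.06.17.1}, using the equivalence of items~\ref{st05.06.17.1} and~\ref{st05.06.17.2}: a nonempty face $F$ is lowest dimensional if and only if it is an affine variety. Consequently it suffices to prove the single equivalence ``$F$ is minimal $\Leftrightarrow$ $F$ is an affine variety,'' after which the corollary follows by chaining the two equivalences.

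First I would record the transitivity of the face relation: if $F$ is a face of $C$ and $F'\subseteq F$, then $F'$ is a face of $C$ if and only if $F'$ is a face of $F$. Granting this standard fact, the nonempty faces of $C$ that are contained in $F$ are exactly the nonempty faces of $F$; hence ``$F$ is a minimal face of $C$'' is equivalent to ``$F$ has no proper nonempty face.'' The task thus becomes the intrinsic statement that a nonempty closed convex set has no proper nonempty face if and only if it is an affine variety.

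For the implication lowest dimensional $\Rightarrow$ minimal (the easy direction), suppose $F=x+\mathrm{lin}~C$ is an affine subspace. Every point of $F$ lies in $\mathrm{relint}~F$, so if $G\subseteq F$ is a nonempty face and $p\in G$, then for any $q\in F$ the identity $p=\tfrac12\left((2p-q)+q\right)$ with $2p-q,\,q\in F$ and the face property of $G$ force $q\in G$; hence $G=F$ and $F$ has no proper nonempty face, i.e. $F$ is minimal.

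For the converse, minimal $\Rightarrow$ lowest dimensional, I argue by contraposition: if $F$ is \emph{not} an affine variety, I produce a proper nonempty face. Since $F\subsetneq\mathrm{aff}(F)$, the relative boundary of $F$ inside $\mathrm{aff}(F)$ is nonempty; at a relative boundary point I would apply the supporting hyperplane theorem within the ambient affine space $\mathrm{aff}(F)$ to obtain a nonzero linear functional attaining its supremum over $F$ on a nonempty closed convex subset $F'$. This exposed set $F'$ is a face of $F$, and it is a \emph{proper} face because the functional is non-constant on $F$; therefore $F$ is not minimal. This supporting-hyperplane/exposed-face step is the crux of the argument and the only genuine obstacle, the remaining parts being bookkeeping with the face relation and the equivalence already established in Theorem~\ref{prop05.06.17.1}. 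Combining the two directions with that theorem yields: $F$ minimal $\Leftrightarrow$ $F$ has no proper nonempty face $\Leftrightarrow$ $F$ is an affine variety $\Leftrightarrow$ $F$ is lowest dimensional, which is the assertion.
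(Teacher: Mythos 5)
Your proof is correct, but the crux of your argument is genuinely different from the paper's. Both proofs invoke Theorem~\ref{prop05.06.17.1} to identify lowest dimensional faces with affine varieties, and both establish the hard direction by showing that a face which is not an affine variety has nonempty relative boundary and therefore a proper nonempty face. The paper obtains that proper face by citing Webster's facial-structure results: after first ruling out that $F$ is a halfflat (a closed half of an affine variety, which visibly has a proper face --- this case distinction is forced by the hypotheses of the cited theorem), it applies \cite[Theorem 2.6.12]{Web} and \cite[Corollary 2.6.11]{Web} to conclude that the smallest face $F_{a}$ of $F$ containing a relative boundary point $a$ satisfies $\dim F_{a}<\dim F$, so $F_{a}$ is proper. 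You instead construct the proper face directly as an exposed face: the supporting hyperplane theorem applied inside $\mathrm{aff}(F)$ at a relative boundary point gives a supporting functional that is non-constant on $F$ (non-constancy is automatic once you work relative to $\mathrm{aff}(F)$, since $F$ spans that affine space), and its argmax set over $F$ is a nonempty proper face of $F$, hence of $C$ by transitivity of the face relation. Your route is more self-contained --- it needs only the supporting hyperplane theorem and avoids both the halfflat case distinction and the appeal to Webster --- while the paper's route is shorter on the page and yields slightly more information (its proper face is the smallest face containing $a$ and has strictly smaller dimension). Your explicit treatment of the easy direction, showing an affine variety has no proper nonempty face via the identity $p=\tfrac12\bigl((2p-q)+q\bigr)$, and your reduction of minimality to the intrinsic statement about faces of $F$ are both sound; the paper handles these points with a one-line dimension argument and an implicit use of the same transitivity fact.
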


\begin{proof}
If $F$ is lowest dimensional, then $F$ is obviously minimal, as a proper
face of $F$ would have dimension strictly smaller than $\dim ~F$.
Conversely, assume that $F$ is minimal and not lowest dimensional, i.e. $F$
is not an affine variety, according to Theorem \ref{prop05.06.17.1}. Since $%
F $ is minimal, it is not a closed half of any affine variety either, as
every halfflat has a proper face. According to \cite[Theorem 2.6.12]{Web}, $%
F=\mathrm{conv}~\mathrm{rbd}~F$ and the inequality $\dim ~F_{a}<\dim ~F$
holds for every $a\in \mathrm{rbd}~F$, where $F_{a}$ is the smallest face of 
$F$ containing $a$, i.e. the intersection of all faces of $F$ that contain $%
a $ (see \cite[Corollary 2.6.11]{Web}). This shows that $F$ is not minimal,
as it has a proper face $F_{a}$. We have thus obtained a contradiction.
\end{proof}

\bigskip

We will denote by $MF\left( C\right) $ the union of the minimal
(equivalently, lowest dimensional) faces of a closed convex set $C\subseteq 
\mathbb{R}^{n}.$

\begin{corollary}
\label{LDF}Let $C\subseteq \mathbb{R}^{n}$ be a nonempty closed convex set
and $U\subseteq \mathbb{R}^{n}$ be a supplementary subspace to $\mathrm{lin}%
~C$ . Then%
\begin{equation*}
MF\left( C\right) =\mathrm{ext}(C\cap U)+\mathrm{lin}~C.
\end{equation*}

In particular,%
\begin{equation*}
MF\left( C\right) =\mathrm{ext}(C\cap \left( \mathrm{lin}~C\right) ^{\bot })+%
\mathrm{lin}~C.
\end{equation*}
\end{corollary}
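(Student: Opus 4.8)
The plan is to read this off directly from Theorem \ref{prop05.06.17.1}, which already enumerates the minimal (equivalently, lowest dimensional) faces of $C$. By the equivalence of statements \ref{st05.06.17.1} and \ref{st05.06.17.6}, a nonempty subset $F$ is a minimal face of $C$ if and only if $F=x+\mathrm{lin}~C$ for some $x\in \mathrm{ext}(C\cap U)$. Hence the family of minimal faces is precisely $\{x+\mathrm{lin}~C:x\in \mathrm{ext}(C\cap U)\}$, parametrized by the extreme points of $C\cap U$. The whole corollary is then a matter of translating "union of minimal faces" into Minkowski-sum notation.

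Concretely, I would start from the definition of $MF(C)$ as the union of the minimal faces of $C$ and substitute the description above, obtaining
\begin{equation*}
MF(C)=\bigcup_{x\in \mathrm{ext}(C\cap U)}\left( x+\mathrm{lin}~C\right) .
\end{equation*}
The right-hand side is, by the very definition of the Minkowski sum of the set $\mathrm{ext}(C\cap U)$ with the subspace $\mathrm{lin}~C$, equal to $\mathrm{ext}(C\cap U)+\mathrm{lin}~C$, which yields the main identity.

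To make the substitution airtight I would verify the two inclusions underlying the identification of the family of faces, each of which is supplied by Theorem \ref{prop05.06.17.1}: every minimal face has the stated form by the implication \ref{st05.06.17.1}~$\Rightarrow$~\ref{st05.06.17.6}, while conversely every set $x+\mathrm{lin}~C$ with $x\in \mathrm{ext}(C\cap U)$ is a minimal face by \ref{st05.06.17.6}~$\Rightarrow$~\ref{st05.06.17.1}, the fact that such a set is genuinely a face of $C$ being precisely what Remark \ref{min faces} establishes.

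Finally, for the "in particular" assertion I would specialize $U$ to the orthogonal complement $\left( \mathrm{lin}~C\right) ^{\bot }$. Since $\mathrm{lin}~C$ is a linear subspace, one has $\left( \mathrm{lin}~C\right) ^{\bot }\oplus \mathrm{lin}~C=\mathbb{R}^{n}$, so $\left( \mathrm{lin}~C\right) ^{\bot }$ is an admissible choice of supplementary subspace and the general formula applies verbatim. I do not expect a genuine obstacle here: the mathematical content is entirely carried by Theorem \ref{prop05.06.17.1} and Remark \ref{min faces}, and the only care required is the routine passage from the union of minimal faces to the Minkowski sum, together with the observation that the orthogonal complement qualifies as a supplement to the lineality.
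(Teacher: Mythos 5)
Your proposal is correct and is essentially the paper's own reasoning: the paper states Corollary \ref{LDF} without proof, treating it as an immediate consequence of the equivalence of statements 1 and 6 in Theorem \ref{prop05.06.17.1} together with Remark \ref{min faces}, exactly as you do. You also correctly identify the one subtle point, namely that Remark \ref{min faces} is needed to certify that each set $x+\mathrm{lin}~C$ with $x\in \mathrm{ext}(C\cap U)$ is genuinely a face before the implication 6~$\Rightarrow$~1 can be invoked.
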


In order to prove Theorem \ref{prop05.06.17.1}, we need some preliminary
results.

\begin{proposition}
\label{rem19.08.17}If $F$ is a nonempty face of a closed convex set $%
C\subseteq \mathbb{R}^{n},$ then%
\begin{equation*}
\mathrm{lin}~F=\mathrm{lin}~C.
\end{equation*}
\end{proposition}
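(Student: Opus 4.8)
The plan is to establish the two inclusions $\mathrm{lin}~C\subseteq \mathrm{lin}~F$ and $\mathrm{lin}~F\subseteq \mathrm{lin}~C$ separately, using throughout that $\mathrm{lin}~S=0^{+}S\cap(-0^{+}S)$ and that, for a convex set, $y\in 0^{+}S$ means $y+S\subseteq S$.

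For $\mathrm{lin}~C\subseteq \mathrm{lin}~F$ I would argue directly from the defining property of a face. Let $y\in \mathrm{lin}~C$, so that $y+C\subseteq C$ and $-y+C\subseteq C$. Fix an arbitrary $x\in F$. Then $x+y\in C$ and $x-y\in C$, while $x=\tfrac12(x+y)+\tfrac12(x-y)$ lies in the relative interior of the segment with endpoints $x-y$ and $x+y$. Since $x\in F$ and $F$ is a face of $C$, both endpoints must belong to $F$; that is, $x+y\in F$ and $x-y\in F$. As $x\in F$ was arbitrary, $y+F\subseteq F$ and $-y+F\subseteq F$, whence $y\in \mathrm{lin}~F$.

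For the reverse inclusion the point to watch is that a face of a closed convex set need not itself be closed, so I would avoid invoking recession-cone results for $F$ and instead pass through half-lines, using only the closedness of $C$. Let $y\in \mathrm{lin}~F$ and pick any $x_{0}\in F$. From $y\in 0^{+}F$ and the convexity of $F$ we get $x_{0}+ty\in F\subseteq C$ for every $t\geq 0$, so the half-line $x_{0}+\mathbb{R}_{+}y$ is contained in the closed convex set $C$; by the standard characterization of the recession cone (see \cite[Theorem 8.3]{R70}) this yields $y\in 0^{+}C$. Applying the same reasoning to $-y\in 0^{+}F$ gives $-y\in 0^{+}C$, and therefore $y\in \mathrm{lin}~C$. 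Combining the two inclusions proves $\mathrm{lin}~F=\mathrm{lin}~C$.

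I expect the first inclusion to follow immediately from the face axiom, and the only genuine care to be needed in the second, precisely in certifying that a lineality direction of the possibly non-closed face $F$ is still a recession direction of $C$; routing the argument through a half-line contained in $C$ together with the closedness of $C$ is what removes any reliance on $F$ being closed.
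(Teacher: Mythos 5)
Your proof is correct and follows essentially the same route as the paper: the inclusion $\mathrm{lin}~C\subseteq\mathrm{lin}~F$ via the midpoint/face argument, and $\mathrm{lin}~F\subseteq\mathrm{lin}~C$ via $0^{+}F\subseteq 0^{+}C$ using \cite[Theorem 8.3]{R70}, which you merely spell out in more detail (iterating $y+F\subseteq F$ and using convexity to produce a half-line in $C$). One small aside: your worry that $F$ might not be closed is unfounded---a face of a closed convex set is automatically closed (see \cite[Corollary 18.1.1]{R70})---but since your argument never uses closedness of $F$, this does not affect its validity.
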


\begin{proof}
The inclusion $\subseteq $ is obvious, as one has $0^{+}(F)\subseteq
0^{+}(C) $ (see \cite[Theorem 8.3, p. 63]{R70}). To prove the opposite
inclusion, let $d\in \mathrm{lin}~C$ and take $x\in F.$ Since $x=\frac{1}{2}%
\left( x+d+x-d\right) $ and $x+d,$ $x-d\in C,$ we have $x+d,$ $x-d\in F,$
which shows that both $d$ and $-d$ belong to $0^{+}(F),$ that is, $d\in 
\mathrm{lin}~F,$ and the equality $\mathrm{lin}~F=\mathrm{lin}~C$ is
completely proved.
\end{proof}

\begin{corollary}
\label{prop24.02.15.1}If $F$ is a nonempty face of a closed convex set $%
C\subseteq \mathbb{R}^{n}$, then $F$ contains $l_{_{C}}$-dimensional affine
varieties, where $l_{_{C}}=\dim (\mathrm{lin}~C)$. In particular $\dim F\geq
\dim (\mathrm{lin}~C)$.
\end{corollary}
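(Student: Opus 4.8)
The plan is to deduce this directly from Proposition \ref{rem19.08.17}, which already gives $\mathrm{lin}~F=\mathrm{lin}~C$, so that the lineality of $F$ is a subspace of dimension exactly $l_{_{C}}$. The only thing left is to promote this equality of lineality spaces into an actual affine variety of dimension $l_{_{C}}$ sitting inside $F$.

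First I would use that $F$ is nonempty to fix a point $x\in F$. Then I would recall that $\mathrm{lin}~F=0^{+}(F)\cap(-0^{+}(F))$, so that for every direction $d\in \mathrm{lin}~F$ both $d$ and $-d$ belong to the recession cone $0^{+}(F)$. By the very definition of the recession cone, $d\in 0^{+}(F)$ means $x+td\in F$ for all $t\geq 0$, and $-d\in 0^{+}(F)$ gives $x+td\in F$ for all $t\leq 0$; hence the whole line $\{x+td:t\in\mathbb{R}\}$ lies in $F$. Since $\mathrm{lin}~F$ is a linear subspace, running $d$ over a basis shows that the entire affine variety $x+\mathrm{lin}~F$ is contained in $F$.

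Now I would substitute the identity $\mathrm{lin}~F=\mathrm{lin}~C$ from Proposition \ref{rem19.08.17} to conclude that $x+\mathrm{lin}~C\subseteq F$. This exhibits an affine variety of dimension $l_{_{C}}=\dim(\mathrm{lin}~C)$ contained in $F$, which is exactly the first assertion; translates of a subspace through different basepoints then give $l_{_{C}}$-dimensional affine varieties as claimed. The dimension inequality $\dim F\geq \dim(\mathrm{lin}~C)$ follows immediately, since a set containing an affine variety of dimension $l_{_{C}}$ has dimension at least $l_{_{C}}$.

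I do not anticipate any genuine obstacle here: the statement is essentially a reformulation of Proposition \ref{rem19.08.17}, and the only point requiring a word of care is the passage from ``both $\pm d$ are recession directions of $F$'' to ``the full line through $x$ in direction $d$ stays in $F$,'' which is a direct application of the definition of $0^{+}(F)$.
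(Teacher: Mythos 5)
Your proof is correct and is essentially the argument the paper intends: the corollary appears there without a written proof, as an immediate consequence of Proposition \ref{rem19.08.17}, and your derivation (fix $x\in F$, use $\mathrm{lin}~F=\mathrm{lin}~C$ and the recession-cone description of lineality to conclude $x+\mathrm{lin}~C\subseteq F$) is precisely how that gap is meant to be filled. One small simplification: the detour through lines and a basis is unnecessary, since for any $d\in \mathrm{lin}~F\subseteq 0^{+}(F)$ the paper's definition $d+F\subseteq F$ already gives $x+d\in F$ directly, so $x+\mathrm{lin}~F\subseteq F$ in one step.
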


\begin{lemma}
\label{Lemma25.02.15.1}Let $C\subseteq \mathbb{R}^{n}$ be a nonempty closed
convex set and $U\subseteq \mathbb{R}^{n}$ be a supplementary subspace to $%
\mathrm{lin}~C.$ A nonempty face $F$ of $C$ is lowest dimensional if and
only if it is an $l_{_{C}}$-dimensional affine variety of $\mathbb{R}^{n}$
parallel with $\mathrm{lin}~C$. In such a case the intersection $F\cap U$ is
a singleton, say $\left\{ x_{_{F}}\right\} $, with $x_{_{F}}$ being an
extreme point of $C\cap U,$ and $F=x_{_{F}}+\mathrm{lin}~C$.
\end{lemma}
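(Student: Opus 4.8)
The plan is to funnel the whole statement through the single slice $F\cap U$ and the dimension identity $\dim F=\dim(F\cap U)+l_{_{C}}$. First I would set $L:=\mathrm{lin}~C$ and invoke Proposition \ref{rem19.08.17} to get $\mathrm{lin}~F=L$, whence $F+L=F$. Using the direct decomposition $U\oplus L=\mathbb{R}^{n}$, every $x\in F$ splits as $x=x_{U}+x_{L}$ with $x_{U}\in U$ and $x_{L}\in L$; since $F+L=F$, the vector $x_{U}=x-x_{L}$ lies in $F\cap U$. This yields $F=(F\cap U)+L$ with the sum direct, so that $F\cap U\neq\emptyset$ and
\begin{equation*}
\dim F=\dim(F\cap U)+l_{_{C}}.
\end{equation*}
In particular, $F$ is an $l_{_{C}}$-dimensional affine variety parallel with $L$ if and only if $\dim(F\cap U)=0$: if $F\cap U=\{x_{_{F}}\}$ then $F=x_{_{F}}+L$, and conversely $F=p+L$ forces $F\cap U=(p+L)\cap U$ to be a single point.

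The hard part, and where the real content lies, is to show that the least dimension attained by a face of $C$ is exactly $l_{_{C}}$. Here I would first check that $C\cap U$ is a nonempty closed convex set with $\mathrm{lin}(C\cap U)=\{0\}$: indeed $C\cap U\supseteq F\cap U\neq\emptyset$, while $0^{+}(C\cap U)\subseteq 0^{+}C\cap U$ gives $\mathrm{lin}(C\cap U)\subseteq L\cap U=\{0\}$. A nonempty line-free closed convex set possesses an extreme point, so I can pick $x_{0}\in\mathrm{ext}(C\cap U)$; by the argument of Remark \ref{min faces}, the set $x_{0}+L$ is then a face of $C$, and it has dimension $l_{_{C}}$. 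Combined with Corollary \ref{prop24.02.15.1}, which guarantees $\dim G\geq l_{_{C}}$ for every nonempty face $G$ of $C$, this proves that the least face dimension equals precisely $l_{_{C}}$. Producing this explicit face of dimension $l_{_{C}}$ is the main obstacle; the remaining equivalences are essentially bookkeeping.

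With this in hand the lemma follows. By definition $F$ is lowest dimensional exactly when $\dim F$ equals that least value $l_{_{C}}$, which by the identity above is equivalent to $\dim(F\cap U)=0$, i.e. to $F$ being an $l_{_{C}}$-dimensional affine variety $x_{_{F}}+L$ parallel with $L$, where $\{x_{_{F}}\}=F\cap U$. Finally, for the supplementary claims I would use that, $F$ being a face of $C$ and $C\cap U$ a convex subset of $C$, the intersection $F\cap U$ is a face of $C\cap U$; when it reduces to the singleton $\{x_{_{F}}\}$ this says exactly that $x_{_{F}}\in\mathrm{ext}(C\cap U)$, and $F=x_{_{F}}+L=x_{_{F}}+\mathrm{lin}~C$, as asserted.
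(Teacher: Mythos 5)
Your proof is correct, but it takes a genuinely different route from the paper's. The paper proves the implication (lowest dimensional $\Rightarrow$ affine variety) intrinsically: a lowest dimensional face can have no proper faces, hence has empty relative boundary, hence equals $\mathrm{aff}(F)$ by a connectedness argument; the direction space is then identified with $\mathrm{lin}~C$ by a direct face computation, the converse follows from Corollary \ref{prop24.02.15.1}, and the singleton/extremality claims are checked by a bespoke convex-combination argument. You instead reduce everything to the slice: Proposition \ref{rem19.08.17} gives $F=(F\cap U)+\mathrm{lin}~C$ and the dimension identity $\dim F=\dim (F\cap U)+l_{_{C}}$, and you pin down the minimal face dimension as exactly $l_{_{C}}$ by exhibiting a face of that dimension, namely $x_{0}+\mathrm{lin}~C$ for an extreme point $x_{0}$ of the line-free closed convex slice $C\cap U$ (existence by Rockafellar's extreme-point theorem, faceness by Remark \ref{min faces}). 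The trade-off is real: the paper's argument needs the topological step (empty relative boundary plus connectedness of the affine hull) but no existence theorem for extreme points, whereas yours needs that existence theorem and Remark \ref{min faces} but avoids the topology altogether. Your reliance on Remark \ref{min faces} is legitimate and non-circular, since its proof in the paper is self-contained and does not depend on Theorem \ref{prop05.06.17.1} or on this lemma. Two smaller differences: you obtain the extremality of $x_{_{F}}$ from the general fact that a face of $C$ meets the convex subset $C\cap U$ in a face of $C\cap U$, where the paper verifies it by hand; and your route yields, as an explicit by-product, that the least dimension of a nonempty face of $C$ equals $l_{_{C}}$.
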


\begin{proof}
Assume that $F$ is a lowest dimensional face of the closed convex set $C$.
Then $F$ has no proper faces and empty relative boundary therefore. In other
words the boundary of $F$ in $\mathrm{aff}(F)$ is empty, which shows, due to
the connectedness of $\mathrm{aff}(F)$, that $F=\mathrm{aff}(F)$ (see e.g 
\cite[p. 86]{VINK}). Thus $F=x+V$ for some $x\in F$ and some subspace $V$ of 
$\mathbb{R}^{n}$. Since%
\begin{equation*}
F+V=x+V+V=x+V=F,
\end{equation*}%
we have $V\subseteq 0^{+}C$ as well as $V=-V\subseteq -0^{+}C$, namely%
\begin{equation*}
V\subseteq 0^{+}C\cap (-0^{+}C)=\mathrm{lin}~C.
\end{equation*}%
To prove the converse inclusion, let $d\in \mathrm{lin}~C.$ Then we have $%
x+d,x-d\in C;$ hence, since $\frac{1}{2}\left( x+d\right) +\frac{1}{2}\left(
x-d\right) \in F$ and $F$ is a face, we have $x+d\in F=x+V,$ that is, $d\in
V.$ This shows that $V=\mathrm{lin}~C$.

Conversely, if the face $F$ of $C$ is an $l_{_{C}}$-dimensional affine
variety of $\mathbb{R}^{n}$ parallel with $\mathrm{lin}~C$, then it is
obviously lowest dimensional due to Corollary \ref{prop24.02.15.1}.

The intersection $F\cap U$ is obviously a singleton, say $\left\{
x_{_{F}}\right\} $, and one has $F=x_{_{F}}+lin~C$. In order to justify the
extreme property of $x_{_{F}}$, assume that $x_{_{F}}=(1-t)p+tq$ for some $%
p,q\in C\cap U$ and some $t\in ]0,1[$. Since $x_{_{F}}\in F$ and $F$ is a
face of $C$, it follows that $p,q\in F=x_{_{F}}+\mathrm{lin}~C$. Thus $%
p=x_{_{F}}+p_{l}$ and $q=x_{_{F}}+q_{_{l}}$ for some $p_{_{l}},\ q_{_{l}}\in 
\mathrm{lin}~C$. On the other hand,%
\begin{equation*}
p_{_{l}}=p-x_{_{F}},\ q_{_{l}}=q-x_{_{F}}\in U.
\end{equation*}%
as $p,q,x_{_{F}}\in U$. Thus $p_{_{l}},q_{_{l}}\in U\cap \mathrm{lin}~C\cap
U $, which shows that $p_{_{l}}=q_{_{l}}=0$ and therefore $p=x_{_{F}}=q$.
\end{proof}

\bigskip

\begin{proof}[Proof of Theorem \protect\ref{prop05.06.17.1}]
The equivalence [1 $\Leftrightarrow $ 3] and the implication [1 $%
\Longrightarrow $ 5] follow from Lemma \ref{Lemma25.02.15.1}.

[2 $\Longrightarrow $ 3]. Assume that $F=x+V$ for some $x\in F$ and a
subspace $V$ of $\mathbb{R}^{n}$. Since $V=\mathrm{lin}~F,$ equality 3
follows via Proposition \ref{rem19.08.17}.

The implications [3 $\Longrightarrow $ 2], [5 $\Longrightarrow $ 4] and [6 $%
\Longrightarrow $ 3] are obvious.

The implication [4 $\Longrightarrow $ 6] follows from the equality $F=F\cap
U+\mathrm{lin}~F.$
\end{proof}

\section{Closed convex sets with a single minimal face and Motzkin
decomposable sets}

In this section we first characterize the closed convex translated cones
along with their epigraphic counterpats in terms of their minimal faces. The
Motzkin decomposable sets along with the Motzkin decomposable functions are
characterized afterwards.

\begin{remark}
\label{rem20.11.22.1}A closed convex cone $K$ has one single minimal face.
Its single minimal face is $\mathrm{lin}~K$. Indeed we first observe that $0$
is the only extreme point of $K\cap (\mathrm{lin}~K)^{\perp }$. Therefore $%
\mathrm{lin}~K=0+\mathrm{lin}~K$ is the only minimal face of $K$, due to
Theorem \ref{prop05.06.17.1}\eqref{st05.06.17.6}. Note that a translated
closed convex cone has the same property.
\end{remark}

\begin{proposition}
\label{lemma20.11.22.1}If a closed convex set $C\subseteq \mathbb{R}^{n}$
has one single minimal face $F,$ then this face is contained in any face of $%
C$. In other words, $F$ is the intersection of all faces of $C$.
\end{proposition}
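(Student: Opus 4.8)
The plan is to prove the stronger assertion that $F\subseteq G$ for \emph{every} nonempty face $G$ of $C$; the description of $F$ as the intersection of all faces then comes for free, since $F$ is itself a face, so $\bigcap\{\,\text{faces of }C\,\}\subseteq F\subseteq\bigcap\{\,\text{faces of }C\,\}$, forcing equality.

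First I would record the transitivity of the face relation: if $G$ is a face of $C$ and $H$ is a face of $G$, then $H$ is a face of $C$. This is immediate from the definition: given $p,q\in C$ with $(1-t)p+tq\in H$ for some $t\in(0,1)$, the inclusion $H\subseteq G$ together with the fact that $G$ is a face of $C$ gives $p,q\in G$, and then the fact that $H$ is a face of $G$ gives $p,q\in H$.

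Next, fix an arbitrary nonempty face $G$ of $C$ and analyse it with the machinery already developed. By Proposition \ref{rem19.08.17} the lineality is invariant, $\mathrm{lin}~G=\mathrm{lin}~C$, so $\dim(\mathrm{lin}~G)=l_{C}$. Since $G$ is itself a nonempty closed convex set, it possesses a lowest dimensional face $H$ (the dimensions of its nonempty faces are integers bounded below by $l_{C}$, so a least value is attained; equivalently one may take $H\in MF(G)$ via Corollary \ref{LDF}). Applying Lemma \ref{Lemma25.02.15.1} to $G$, this $H$ is an $l_{C}$-dimensional affine variety parallel to $\mathrm{lin}~C$; in particular $\dim H=l_{C}$.

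It then remains to identify $H$ inside $C$. By the transitivity step, $H$ is a face of $C$, and by Corollary \ref{prop24.02.15.1} every nonempty face of $C$ has dimension at least $l_{C}$; since $\dim H=l_{C}$, the face $H$ is lowest dimensional, hence minimal, in $C$ (using the equivalence between minimality and lowest-dimensionality established above). As $C$ is assumed to have the single minimal face $F$, we conclude $H=F$, whence $F=H\subseteq G$, as desired. The one place requiring genuine care --- and the main obstacle --- is the dimension bookkeeping of the middle step: one must be sure that a minimal face of the subordinate set $G$ still has dimension exactly $l_{C}$ and is still a face of $C$, and it is precisely the lineality invariance of Proposition \ref{rem19.08.17} together with Corollary \ref{prop24.02.15.1} that pins this down.
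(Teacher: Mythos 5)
Your proof is correct and takes essentially the same route as the paper: the paper's entire proof is the single observation that every face of $C$ contains a minimal face of $C$, which is exactly the fact you establish before invoking uniqueness of $F$. The only difference is that you supply the supporting details the paper treats as immediate --- transitivity of the face relation, the lineality invariance $\mathrm{lin}~G=\mathrm{lin}~C$ from Proposition \ref{rem19.08.17}, and the dimension count via Corollary \ref{prop24.02.15.1} showing that a lowest dimensional face of a face $G$ is still a lowest dimensional (hence minimal) face of $C$ itself.
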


\begin{proof}
It is an immediate consequence of the fact that every face of $C$ contains a
minimal face.
\end{proof}

\begin{proposition}
\label{prop20.11.22.1}A nonempty closed convex set $C\subseteq \mathbb{R}%
^{n} $ has one single minimal face if and only if it is a closed convex cone
or a translated closed convex cone.
\end{proposition}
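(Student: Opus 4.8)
The plan is to prove the two implications separately. The ``if'' direction is already essentially contained in Remark~\ref{rem20.11.22.1}: a closed convex cone has a single minimal face, namely its lineality, and the same property is preserved under translation. So for that direction I would simply invoke Remark~\ref{rem20.11.22.1} and concentrate all the work on the ``only if'' direction.

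For the converse, assume $C$ has a single minimal face. Fix a supplementary subspace $U$ to $\mathrm{lin}~C$ and put $D:=C\cap U$. By Corollary~\ref{LDF} we have $MF(C)=\mathrm{ext}(D)+\mathrm{lin}~C$, and by Theorem~\ref{prop05.06.17.1}\eqref{st05.06.17.6} the minimal faces of $C$ are exactly the affine varieties $x+\mathrm{lin}~C$ with $x\in\mathrm{ext}(D)$. Since such points lie in $U$ and $U\cap\mathrm{lin}~C=\{0\}$, two of them determine the same minimal face only if they coincide; hence the single-minimal-face hypothesis is equivalent to $\mathrm{ext}(D)$ being a singleton, say $\mathrm{ext}(D)=\{x_{0}\}$. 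I would also record that $D$ is line-free: its recession cone is $0^{+}D=0^{+}C\cap U$, so its lineality is $\mathrm{lin}~C\cap U=\{0\}$.

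The crux is then the structural fact that a line-free closed convex set possessing a unique extreme point must be a translated cone. Here I would appeal to the classical representation of line-free closed convex sets (Klee/Rockafellar), $D=\mathrm{conv}(\mathrm{ext}~D)+0^{+}D$; with $\mathrm{ext}(D)=\{x_{0}\}$ this collapses to $D=x_{0}+0^{+}D$. I expect this to be the main obstacle, since it is precisely where line-freeness is needed, both to guarantee that the extreme point exists and to validate the sum representation.

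Finally I would reassemble $C$ from $D$. Decomposing each $c\in C$ as $c=c_{U}+c_{l}$ with $c_{U}\in U$ and $c_{l}\in\mathrm{lin}~C$, one has $c_{U}=c-c_{l}\in C+\mathrm{lin}~C=C$, so $c_{U}\in D$ and therefore $C=D+\mathrm{lin}~C$; the same decomposition applied to the convex cone $0^{+}C$ (which contains $\mathrm{lin}~C$) gives $0^{+}C=0^{+}D+\mathrm{lin}~C$. Combining these,
\[
C=D+\mathrm{lin}~C=\bigl(x_{0}+0^{+}D\bigr)+\mathrm{lin}~C=x_{0}+\bigl(0^{+}D+\mathrm{lin}~C\bigr)=x_{0}+0^{+}C.
\]
Since $0^{+}C$ is a closed convex cone, $C=x_{0}+0^{+}C$ is a translated closed convex cone, which is the desired conclusion.
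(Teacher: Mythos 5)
Your proof is correct, and it takes a genuinely different route from the paper's. The paper never leaves the set $C$ itself: after translating so that the origin lies in the unique minimal face, it shows that $C$ is a cone by induction on $\dim C$ --- every proper face inherits the single-minimal-face property (each face contains a minimal face, cf.\ Proposition \ref{lemma20.11.22.1}) and has strictly smaller dimension, hence is a cone by the inductive hypothesis; so if $x\in C$ and $tx\notin C$ for some $t>0$, the point $sx$ with $s=\sup\{\lambda>0:\lambda x\in C\}$ would lie on $\mathrm{rbd}~C$, hence in a proper face, and that face being a cone would put $tx=ts^{-1}(sx)$ back into $C$, a contradiction. You instead pass to the line-free slice $D=C\cap U$, use Corollary \ref{LDF} to convert the hypothesis into $\mathrm{ext}(D)=\{x_{0}\}$, and invoke the Klee--Rockafellar representation $D=\mathrm{conv}\left(\mathrm{ext}~D\right)+0^{+}D$ of line-free closed convex sets (\cite[Theorem 18.5]{R70}), reassembling via $C=D+\mathrm{lin}~C$ and $0^{+}C=0^{+}D+\mathrm{lin}~C$; all of these steps check out, including the identification $0^{+}D=0^{+}C\cap U$ and the use of $U\cap\mathrm{lin}~C=\{0\}$ to match minimal faces with extreme points. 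What your route buys: it is shorter, it exhibits the structure explicitly --- $C=x_{0}+0^{+}C$, with the vertex identified as the unique extreme point of the slice and the cone as the recession cone --- and it follows the slice-and-reassemble pattern the paper itself exploits elsewhere (e.g.\ Theorem \ref{th24.02.15.1}). The price is that the crux is outsourced to an external classical theorem, whereas the paper's induction is elementary and self-contained within its Section 2 machinery; also, when citing that theorem, note that Rockafellar states it as $D$ being the convex hull of its extreme points and extreme \emph{directions}, so the form with $+\,0^{+}D$ uses the standard fact that a pointed closed convex cone is generated by its extreme rays --- harmless in your application, since with $\mathrm{ext}(D)$ a singleton the set $x_{0}+0^{+}D$ is closed and convex anyway.
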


\begin{proof}
The obvious implication is Remark \ref{rem20.11.22.1}. We now assume that
the closed convex set $C$ has one single minimal face. We assume that the
origin of $\mathbb{R}^{n}$ belongs to the minimal face $F$ of $C$, as
otherwise we translate the set with the opposite of a vector in $F$. We will
prove that, under this assumption, $C$ is actually a cone. In what follows
we will use an inductive argument on the dimension of $C$ to show that the
positive multiples of the vectors in $C$ remain in $C$. Indeed, if $C$ is $0$%
-dimensional, i.e. $C=\left\{ 0\right\} $, then it is obviously a cone. We
now assume that $\dim C\geq 1$ and the proper faces of $C$, which have a
strictly smaller dimension, are cones. If $x\in C$ would be a point such
that $tx\not\in C$ for some $t>0$, then $sx\in \mathrm{rbd}~C$, where $%
s=\sup \{\lambda >0\ |\ \lambda x\in C\}\geq 1$, i.e $sx$ belongs to a
proper face of $C$, which is, by the inductive hypothesis, a cone. In
particular, $tx=ts^{-1}(sx)$ belongs to that proper face and therefore to $C$%
, which is absurd.
\end{proof}

\begin{remark}
\label{epigraph cone}The epigraph of a function $f:\mathbb{R}%
^{n}\longrightarrow \mathbb{R\cup }\left\{ +\infty \right\} $ is a nonempty
closed convex cone if and only if $f$ is in the orbit of a lower
semicontinuous (l.s.c., in brief) proper sublinear function $\phi :\mathbb{R}%
^{n}\longrightarrow \mathbb{R\cup }\left\{ +\infty \right\} $ with respect
to the action 
\begin{equation*}
(\mathbb{R}^{n}\times \mathbb{R})\times \mathbb{R}^{\mathbb{R}%
^{n}}\longrightarrow \mathbb{R}^{\mathbb{R}^{n}},\left( (u,v),\phi \right)
\mapsto (u,v)\oplus \phi ,
\end{equation*}%
where $((u,v)\oplus \phi )(x):=\phi (x+u)-v$. Note that 
\begin{equation*}
\mathrm{epi}\left( (u,v)\oplus \phi \right) =\mathrm{epi}~\phi -(u,v),
\end{equation*}%
for all $\left( (u,v),\phi \right) \in (\mathbb{R}^{n}\times \mathbb{R}%
)\times \mathbb{R}^{\mathbb{R}^{n}}$.
\end{remark}

\bigskip

From Remark \ref{epigraph cone} and Proposition \ref{prop20.11.22.1}, one
obtains the following result.

\begin{corollary}
Let $f:\mathbb{R}^{n}\longrightarrow \mathbb{R\cup }\left\{ +\infty \right\} 
$ be a l.s.c. proper convex function. Then there exist $u\in \mathbb{R}^{n}$
and $v\in \mathbb{R}$ such that $f(\cdot +u)-v$ is sublinear if and only if $%
\mathrm{epi}~f$ has one single minimal face.
\end{corollary}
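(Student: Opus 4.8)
The plan is to translate the analytic condition on $f$ into a geometric condition on $\mathrm{epi}~f$ and then invoke Proposition \ref{prop20.11.22.1}. The bridge is the identity recorded in Remark \ref{epigraph cone}: for every $(u,v)\in\mathbb{R}^{n}\times\mathbb{R}$ one has $\mathrm{epi}\big(f(\cdot+u)-v\big)=\mathrm{epi}~f-(u,v)$, together with the fact (also extracted from that remark) that a l.s.c.\ proper convex function is sublinear exactly when its epigraph is a nonempty closed convex cone. Since $f$ is l.s.c.\ proper convex, $\mathrm{epi}~f$ is a nonempty closed convex set, so Proposition \ref{prop20.11.22.1} applies to it.

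For the forward implication, suppose $g:=f(\cdot+u)-v$ is sublinear for some $u\in\mathbb{R}^{n}$, $v\in\mathbb{R}$. Then $\mathrm{epi}~g$ is a nonempty closed convex cone, and by the translation identity $\mathrm{epi}~f=\mathrm{epi}~g+(u,v)$ is a translate of that cone, i.e.\ a closed convex cone or a translated closed convex cone. Proposition \ref{prop20.11.22.1} then yields that $\mathrm{epi}~f$ has a single minimal face.

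For the converse, assume $\mathrm{epi}~f$ has a single minimal face. Proposition \ref{prop20.11.22.1} provides a nonempty closed convex cone $K$ and a vector $(u,v)\in\mathbb{R}^{n}\times\mathbb{R}$ with $\mathrm{epi}~f=K+(u,v)$. Subtracting $(u,v)$ and applying the translation identity gives $K=\mathrm{epi}~f-(u,v)=\mathrm{epi}\big(f(\cdot+u)-v\big)$. As $K$ is a nonempty closed convex cone, the function $f(\cdot+u)-v$ whose epigraph it is must be sublinear, which is the desired conclusion.

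The argument is essentially bookkeeping once the two cited results are in hand; the only point requiring care is the correct reading of Remark \ref{epigraph cone}, namely extracting from it the clean equivalence ``sublinear $\Leftrightarrow$ conical epigraph'' together with the translation formula, and then matching the orbit/translate picture there with the ``cone or translated cone'' dichotomy of Proposition \ref{prop20.11.22.1}. I do not anticipate a genuine obstacle beyond ensuring that the translating vector $(u,v)$ produced by the Proposition is interpreted consistently as a horizontal shift $u$ of the variable together with a vertical shift $v$ of the value.
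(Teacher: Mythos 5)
Your proof is correct and follows essentially the same route as the paper: the paper obtains this corollary precisely by combining Remark \ref{epigraph cone} with Proposition \ref{prop20.11.22.1}, which is exactly your argument with the translation bookkeeping written out. The only care needed, as you note, is reading Remark \ref{epigraph cone} as supplying both the identity $\mathrm{epi}\bigl(f(\cdot+u)-v\bigr)=\mathrm{epi}~f-(u,v)$ and the equivalence between sublinearity and having a conic epigraph, and your handling of both is accurate.
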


The last result in this section is a counterpart to Corollary \ref{char
cones} for functions.

\begin{proposition}
For a l.s.c. proper convex function $f:\mathbb{R}^{n}\longrightarrow \mathbb{%
R\cup }\left\{ +\infty \right\} $ and a point $u\in \mathrm{dom}~f$ such
that $\mathrm{dom}~f-u$ is a cone, there exists $v\in \mathbb{R}$ such that $%
f(\cdot +u)-v$ is sublinear if and only if $\partial f\left( \mathbb{R}%
^{n}\right) =\partial f\left( u\right) .$
\end{proposition}

\begin{proof}
Let $g:=f(\cdot +u)-v.$ Since $\mathrm{dom}~g=\mathrm{dom}~f-u$ and $%
\partial g\left( x\right) =\partial f\left( x+u\right) $ for every $x\in 
\mathbb{R}^{n},$ we assume, without loss of generality, that $u=0$ and $v=0,$
so that $\mathrm{dom}~f$ is a cone and $f\left( 0\right) =0.$ If $f$ is
sublinear, from the well known (and easy to prove) equality%
\begin{equation}
\partial f\left( x\right) =\partial f\left( 0\right) \cap \left\{ x^{\ast
}\in \mathbb{R}^{n}:\left\langle x^{\ast },x\right\rangle =f\left( x\right)
\right\} ,  \label{subd sublinear}
\end{equation}%
which holds for every $x\in \mathbb{R}^{n},$ it immediately follows that $%
\partial f\left( \mathbb{R}^{n}\right) =\partial f\left( 0\right) .$
Conversely, assume that the latter equality holds and define $h:\mathbb{R}%
^{n}\longrightarrow \mathbb{R\cup }\left\{ +\infty \right\} $ by $h\left(
x\right) :=\sup_{x^{\ast }\in \partial f\left( 0\right) }\left\langle
x^{\ast },x\right\rangle .$ Clearly, $h$ is sublinear and, since $f\left(
0\right) =0,$ it is a minorant of $f.$ We will actually prove that $h=f.$ To
this aim, let $x^{\prime }\in \mathrm{dom}~\partial f$ and take $x_{0}^{\ast
}\in \partial f\left( x^{\prime }\right) .$ Then, by (\ref{subd sublinear}),
we have $f\left( x^{\prime }\right) =\left\langle x_{0}^{\ast },x^{\prime
}\right\rangle .$ Hence, by the lower semicontinuity of $f$ and the fact
that $\mathrm{dom}~\partial f$ is dense in $\mathrm{dom}~f$ (by Br\o %
ndsted--Rockafellar theorem), for every $x\in \mathrm{dom}~f$ we get $%
f\left( x\right) \leq \left\langle x_{0}^{\ast },x\right\rangle \leq
\sup_{x^{\ast }\in \partial f\left( 0\right) }\left\langle x^{\ast
},x\right\rangle =h\left( x\right) ,$ the latter inequality following from
the relations $x_{0}^{\ast }\in \partial f\left( x\right) \subseteq \partial
f\left( \mathbb{R}^{n}\right) =\partial f\left( 0\right) .$ We have thus
proved that $f=h$ on $\mathrm{dom}~f.$ Since $\mathrm{dom}~f$ is a convex
cone, this shows that $f$ is sublinear.
\end{proof}

\bigskip We say that a nonempty closed convex set $C\subseteq \mathbb{R}^{n}$
is Motzkin decomposable ($M$-decomposable in short) if there exist a
decomposition $C=C_{0}+K,$ with the set $C_{0}\subseteq \mathbb{R}^{n}$
being convex and compact and $K$ being a closed convex cone. Then we say
that $C_{0}$ and $K$ are the compact and conic components, respectively, of
that decomposition. The conic component is uniquely determined, namely, one
has $K=0^{+}\left( C\right) .$ The original Motzkin Theorem \cite{Mo}
asserts that every polyhedral convex set is the sum of a polytope and a
polyhedral convex cone, namely it is $M$-decomposable. From the following
characterization of Motzkin decomposable sets, we will obtain a new
characterization of closed convex cones.

\begin{proposition}
\label{char Motzk}For a nonempty closed convex set $C\subseteq \mathbb{R}%
^{n} $ and a compact convex set $C_{0}\subseteq C,$ the following statements
are equivalent:

\begin{enumerate}
\item $C$ is Motzkin decomposable and $C_{0}$ is a compact component of $C.$

\item $N_{C}\left( \mathbb{R}^{n}\right) =N_{C}\left( \mathbb{C}_{0}\right)
. $
\end{enumerate}
\end{proposition}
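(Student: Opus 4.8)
The plan is to translate both statements into the language of the support function $\sigma _{C}=\delta _{C}^{\ast }$, using the identities recorded above: $\mathrm{dom}(\sigma _{C})=\mathrm{barr}(C)$, $\mathrm{dom}(\partial \sigma _{C})=N_{C}(\mathbb{R}^{n})$, the equivalence $x^{\ast }\in N_{C}(x)\Leftrightarrow x\in \partial \sigma _{C}(x^{\ast })$ valid for closed convex $C$, and $[\mathrm{barr}(C)]^{\circ }=0^{+}C$. Two elementary facts are used throughout: since $C_{0}$ is compact, $\sigma _{C_{0}}$ is finite and continuous on all of $\mathbb{R}^{n}$, and for each $x^{\ast }$ the value $\sigma _{C_{0}}(x^{\ast })$ is attained at some point of $C_{0}$.

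For the implication $[1\Rightarrow 2]$ I would only need the inclusion $N_{C}(\mathbb{R}^{n})\subseteq N_{C}(C_{0})$, the reverse being trivial from $C_{0}\subseteq C$. Given $x^{\ast }\in N_{C}(\mathbb{R}^{n})$, the supremum defining $\sigma _{C}(x^{\ast })$ is finite, so $x^{\ast }\in \mathrm{barr}(C)=(0^{+}C)^{\circ }$. Choosing $c_{0}\in C_{0}$ that maximizes $\langle \cdot ,x^{\ast }\rangle $ over $C_{0}$ and writing an arbitrary point of $C=C_{0}+0^{+}C$ as $c_{0}^{\prime }+k$ with $c_{0}^{\prime }\in C_{0}$ and $k\in 0^{+}C$, I would split $\langle (c_{0}^{\prime }+k)-c_{0},x^{\ast }\rangle =\langle c_{0}^{\prime }-c_{0},x^{\ast }\rangle +\langle k,x^{\ast }\rangle \leq 0$, the first term being nonpositive by maximality of $c_{0}$ and the second because $x^{\ast }\in (0^{+}C)^{\circ }$. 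Hence $x^{\ast }\in N_{C}(c_{0})\subseteq N_{C}(C_{0})$.

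The substance lies in $[2\Rightarrow 1]$, where the goal is to prove $\sigma _{C}=\sigma _{C_{0}+0^{+}C}$, which forces $C=C_{0}+0^{+}C$ (both being closed convex), exhibiting $C$ as Motzkin decomposable with compact component $C_{0}$. First, on $N_{C}(\mathbb{R}^{n})=\mathrm{dom}(\partial \sigma _{C})$ hypothesis 2 supplies, for each such $x^{\ast }$, a point $x\in C_{0}$ with $x^{\ast }\in N_{C}(x)$, i.e.\ $\sigma _{C}(x^{\ast })=\langle x,x^{\ast }\rangle \leq \sigma _{C_{0}}(x^{\ast })$; combined with the trivial $\sigma _{C_{0}}\leq \sigma _{C}$ (as $C_{0}\subseteq C$) this yields $\sigma _{C}=\sigma _{C_{0}}$ on $\mathrm{dom}(\partial \sigma _{C})$. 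Since a convex function is subdifferentiable throughout the relative interior of its domain \cite[Theorem 23.4]{R70}, one has $\mathrm{ri}(\mathrm{dom}\,\sigma _{C})\subseteq \mathrm{dom}(\partial \sigma _{C})$, so the equality holds in particular on $\mathrm{ri}(\mathrm{barr}(C))$.

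The main obstacle is to promote this equality from $\mathrm{ri}(\mathrm{barr}(C))$ to all of $\mathrm{barr}(C)=\mathrm{dom}(\sigma _{C})$, since $\mathrm{dom}(\partial \sigma _{C})$ need not exhaust $\mathrm{barr}(C)$ at its relative boundary. I would handle this by the segmentwise closedness of a closed proper convex function: fixing $z^{\ast }\in \mathrm{ri}(\mathrm{barr}(C))$ (nonempty, as $\mathrm{barr}(C)$ is a nonempty convex set) and any $x^{\ast }\in \mathrm{barr}(C)$, the points $x_{\lambda }^{\ast }:=(1-\lambda )x^{\ast }+\lambda z^{\ast }$ lie in $\mathrm{ri}(\mathrm{barr}(C))$ for $\lambda \in (0,1]$, whence $\sigma _{C}(x_{\lambda }^{\ast })=\sigma _{C_{0}}(x_{\lambda }^{\ast })$; letting $\lambda \downarrow 0$ and invoking $\sigma _{C}(x^{\ast })=\lim_{\lambda \downarrow 0}\sigma _{C}(x_{\lambda }^{\ast })$ \cite[Theorem 7.5]{R70} together with the continuity of $\sigma _{C_{0}}$ gives $\sigma _{C}(x^{\ast })=\sigma _{C_{0}}(x^{\ast })$. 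Finally, off $\mathrm{barr}(C)$ both $\sigma _{C}$ and $\sigma _{C_{0}+0^{+}C}=\sigma _{C_{0}}+\sigma _{0^{+}C}$ equal $+\infty $ (since $\sigma _{0^{+}C}=+\infty $ there), while on $\mathrm{barr}(C)$ one has $\sigma _{0^{+}C}\equiv 0$ and thus $\sigma _{C_{0}+0^{+}C}=\sigma _{C_{0}}=\sigma _{C}$; hence $\sigma _{C}=\sigma _{C_{0}+0^{+}C}$ identically and $C=C_{0}+0^{+}C$, as required.
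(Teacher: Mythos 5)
Your direction [1 $\Rightarrow$ 2] is correct (it is a minor variant of the paper's one-line argument that $N_{C}(c+d)\subseteq N_{C}(c)$ for $c\in C_{0}$ and $d\in 0^{+}C$), and the first three steps of your [2 $\Rightarrow$ 1] are sound: $\sigma_{C}=\sigma_{C_{0}}$ on $N_{C}(\mathbb{R}^{n})=\mathrm{dom}(\partial\sigma_{C})$, hence on $\mathrm{ri}(\mathrm{barr}(C))$ by \cite[Theorem 23.4]{R70}, hence on all of $\mathrm{barr}(C)$ by your segment argument. The gap is the parenthetical claim in your last sentence, that $\sigma_{0^{+}C}=+\infty$ off $\mathrm{barr}(C)$. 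Since $0^{+}C$ is a closed convex cone, $\sigma_{0^{+}C}=\delta_{(0^{+}C)^{0}}$, and $(0^{+}C)^{0}=[\mathrm{barr}(C)]^{00}=\mathrm{cl}\,\mathrm{barr}(C)$; so your claim says exactly that $\mathrm{barr}(C)$ is closed. That is false for general closed convex sets --- for $C=\mathrm{epi}(x\mapsto x^{2})\subseteq\mathbb{R}^{2}$ one has $\mathrm{barr}(C)=\left(\mathbb{R}\times(-\infty,0)\right)\cup\{(0,0)\}$, which is not closed --- and its truth under hypothesis 2 is, in substance, part of what must be proved (closedness of the barrier cone is a necessary consequence of Motzkin decomposability). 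At any $x^{\ast}\in\mathrm{cl}\,\mathrm{barr}(C)\setminus\mathrm{barr}(C)$ you would have $\sigma_{C}(x^{\ast})=+\infty$ but $\sigma_{C_{0}+0^{+}C}(x^{\ast})=\sigma_{C_{0}}(x^{\ast})<+\infty$, so the identity $\sigma_{C}=\sigma_{C_{0}+0^{+}C}$ does not follow from what you established.

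The repair is small and uses only your own tools: run the segment argument for every $x^{\ast}\in(0^{+}C)^{0}=\mathrm{cl}\,\mathrm{barr}(C)$ rather than only for $x^{\ast}\in\mathrm{barr}(C)$. By \cite[Theorem 6.1]{R70} the points $(1-\lambda)x^{\ast}+\lambda z^{\ast}$ lie in $\mathrm{ri}(\mathrm{barr}(C))$ for $\lambda\in(0,1]$ even when $x^{\ast}$ is merely a closure point, and \cite[Theorem 7.5]{R70} (applicable because $\sigma_{C}$ is closed) then gives $\sigma_{C}(x^{\ast})=\lim_{\lambda\downarrow 0}\sigma_{C_{0}}(x_{\lambda}^{\ast})=\sigma_{C_{0}}(x^{\ast})<+\infty$. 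This proves simultaneously that $\mathrm{barr}(C)=(0^{+}C)^{0}$, i.e. the barrier cone is closed, and that $\sigma_{C}=\sigma_{C_{0}}$ on it; your final dichotomy, now taken on and off $(0^{+}C)^{0}$, is correct and yields $C=C_{0}+0^{+}C$. For comparison, the paper's proof avoids this issue entirely by staying with the half-space representation $C=\left\{x:\left\langle x^{\ast},x\right\rangle\le\sigma_{C_{0}}(x^{\ast})\ \forall x^{\ast}\in N_{C}(C_{0})\right\}$ and using the continuity of $\sigma_{C_{0}}$ to enlarge the index set to its closure $\mathrm{cl}\,N_{C}(C_{0})=(0^{+}C)^{0}$: enlarging a set of valid inequalities can only shrink the right-hand side, and continuity guarantees no point of $C$ is lost, so the closedness of $\mathrm{barr}(C)$ never has to be addressed separately.
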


\begin{proof}
\lbrack 1 $\Longrightarrow $ 2] It is enough to observe that, for $c\in
C_{0} $ and $d\in 0^{+}\left( C\right) ,$ one has $N_{C}\left( c+d\right)
\subseteq N_{C}\left( c\right) .$

[2 $\Longrightarrow $ 1] From the well known equality%
\begin{equation*}
C=\left\{ x\in \mathbb{R}^{n}:\left\langle x^{\ast },x\right\rangle \leq
\delta _{C}^{\ast }\left( x^{\ast }\right) \text{\qquad }\forall x^{\ast
}\in N_{C}\left( \mathbb{R}^{n}\right) \right\} ,
\end{equation*}%
using that $\delta _{C}^{\ast }\left( x^{\ast }\right) =\delta
_{C_{0}}^{\ast }\left( x^{\ast }\right) $ for every $x^{\ast }\in
N_{C}\left( \mathbb{C}_{0}\right) ,$ by 2 we obtain%
\begin{equation}
C=\left\{ x\in \mathbb{R}^{n}:\left\langle x^{\ast },x\right\rangle \leq
\delta _{C_{0}}^{\ast }\left( x^{\ast }\right) \text{\qquad }\forall x^{\ast
}\in N_{C}\left( \mathbb{C}_{0}\right) \right\} .  \label{expr for C}
\end{equation}%
Since $\delta _{C_{0}}^{\ast }$ is continuous, we can replace $N_{C}\left( 
\mathbb{C}_{0}\right) $ with its closure in (\ref{expr for C});\ therefore,
by $cl~N_{C}\left( C_{0}\right) =cl~N_{C}\left( \mathbb{R}^{n}\right)
=cl~barr~C=\left( barr~C\right) ^{00}=\left( 0^{+}\left( C\right) \right)
^{0},$ we obtain%
\begin{equation*}
C=\left\{ x\in \mathbb{R}^{n}:\left\langle x^{\ast },x\right\rangle \leq
\delta _{C_{0}}^{\ast }\left( x^{\ast }\right) \text{\qquad }\forall x^{\ast
}\in \left( 0^{+}\left( C\right) \right) ^{0}\right\} .
\end{equation*}%
Observe that this equality can be equivalently written%
\begin{equation}
C=\left\{ x\in \mathbb{R}^{n}:\left\langle x^{\ast },x\right\rangle \leq
\delta _{C_{0}}^{\ast }\left( x^{\ast }\right) +\delta _{\left( 0^{+}\left(
C\right) \right) ^{0}}\left( x^{\ast }\right) \text{\qquad }\forall x^{\ast
}\in \mathbb{R}^{n}\right\} .  \label{expre for C 2}
\end{equation}%
Hence, since $\delta _{\left( 0^{+}\left( C\right) \right) ^{0}}=\delta
_{0^{+}\left( C\right) }^{\ast }$ and $\delta _{C_{0}}^{\ast }+\delta
_{0^{+}\left( C\right) }^{\ast }=\delta _{C_{0}+0^{+}\left( C\right) }^{\ast
},$ equality (\ref{expre for C 2}) yields%
\begin{equation*}
C=\left\{ x\in \mathbb{R}^{n}:\left\langle x^{\ast },x\right\rangle \leq
\delta _{C_{0}+0^{+}\left( C\right) }^{\ast }\left( x^{\ast }\right) \text{%
\qquad }\forall x^{\ast }\in \mathbb{R}^{n}\right\} =C_{0}+0^{+}\left(
C\right) ,
\end{equation*}%
which proves 1.
\end{proof}

\begin{corollary}
\label{char cones}A closed convex set $C\subseteq \mathbb{R}^{n}$ is a cone
with vertex $x_{0}\in C$ (that is, $C-x_{0}$ is a cone) if and only if $%
N_{C}(\mathbb{R}^{n})=N_{C}(x_{0}).$
\end{corollary}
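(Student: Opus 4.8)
The plan is to derive this corollary directly from Proposition \ref{char Motzk} by specializing the compact convex set to the singleton $C_{0}:=\{x_{0}\}$. The key preliminary observation I would establish is that, for $x_{0}\in C$, the set $C$ is a cone with vertex $x_{0}$ if and only if $C$ is Motzkin decomposable with $\{x_{0}\}$ as a compact component. Once this is in place, the corollary is an immediate translation of Proposition \ref{char Motzk}.

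For the forward direction of that observation, I would argue: if $C-x_{0}$ is a cone, then since $C$ is closed and convex so is $K:=C-x_{0}$, which is therefore a closed convex cone, and the equality $C=\{x_{0}\}+K$ exhibits $C$ as Motzkin decomposable with compact component $\{x_{0}\}$ (a singleton being trivially compact and convex) and conic component $K$. For the converse, if $C=\{x_{0}\}+K$ for some closed convex cone $K$, then recalling from the definition preceding Proposition \ref{char Motzk} that the conic component is necessarily $0^{+}(C)$, I obtain $C-x_{0}=0^{+}(C)$, which is a closed convex cone; hence $C$ is a cone with vertex $x_{0}$.

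Having set this up, I would simply invoke Proposition \ref{char Motzk} with $C_{0}=\{x_{0}\}$. That proposition states that $C$ is Motzkin decomposable with $\{x_{0}\}$ a compact component if and only if $N_{C}(\mathbb{R}^{n})=N_{C}(\{x_{0}\})$. Since $N_{C}(\{x_{0}\})=N_{C}(x_{0})$ by the definition of the total normal cone restricted to a one-point set, combining this with the preliminary observation yields precisely the asserted equivalence: $C-x_{0}$ is a cone if and only if $N_{C}(\mathbb{R}^{n})=N_{C}(x_{0})$.

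I do not anticipate any genuine obstacle, as the entire analytic content is carried by Proposition \ref{char Motzk}; the corollary is essentially a dictionary entry translating ``Motzkin decomposable with singleton compact component'' into ``translated cone.'' The only point deserving a moment's care is the identification of the conic component with $0^{+}(C)$ used in the converse of the preliminary observation, but this uniqueness is already recorded in the definition of Motzkin decomposability stated just before Proposition \ref{char Motzk}.
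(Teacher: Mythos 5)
Your proposal is correct and takes essentially the same route as the paper, whose entire proof is the one-line instruction to apply Proposition \ref{char Motzk} with $C_{0}:=\{x_{0}\}$. The additional details you supply (the equivalence between ``$C-x_{0}$ is a cone'' and ``$C$ is Motzkin decomposable with compact component $\{x_{0}\}$,'' using the uniqueness of the conic component) are precisely the implicit content of that one-line application, spelled out.
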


\begin{proof}
Apply Proposition \ref{char Motzk} with $C_{0}:=\left\{ x_{0}\right\} .$
\end{proof}

\bigskip

Another characterization of Motzkin decomposable sets in terms of the range
of its associated normal cone mapping is provided next.

\begin{proposition}
\label{char Motzk 2}A nonempty closed convex set $C\subseteq \mathbb{R}^{n}$
is Motzkin decomposable if and only if%
\begin{equation}
N_{C}(\mathbb{R}^{n})=\left( 0^{+}\left( C\right) \right) ^{0}.
\label{norm = pol}
\end{equation}
\end{proposition}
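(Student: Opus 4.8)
The plan is to prove the proposition by reducing it to the previously established characterization in Proposition~\ref{char Motzk}. The key observation is that the identity \eqref{norm = pol} should be the ``component-free'' version of condition~2 in Proposition~\ref{char Motzk}, obtained by letting the compact component shrink while exploiting the general identity $\mathrm{cl}~N_C(\mathbb{R}^n)=\left(0^+(C)\right)^0$ that already appears inside the proof of Proposition~\ref{char Motzk}. Indeed, recall from that proof the chain $\mathrm{cl}~N_C(C_0)=\mathrm{cl}~N_C(\mathbb{R}^n)=\mathrm{cl}~\mathrm{barr}~C=\left(\mathrm{barr}~C\right)^{00}=\left(0^+(C)\right)^0$, so the polar cone $\left(0^+(C)\right)^0$ is always the closure of the total normal cone. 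Thus \eqref{norm = pol} asserts precisely that $N_C(\mathbb{R}^n)$ is closed and coincides with this polar; the content of the proposition is that this closedness is equivalent to Motzkin decomposability.

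First I would prove the forward implication. Assume $C$ is Motzkin decomposable, so $C=C_0+0^+(C)$ for some compact convex $C_0$. By Proposition~\ref{char Motzk} (direction $1\Rightarrow 2$), we have $N_C(\mathbb{R}^n)=N_C(C_0)$. The task is then to show $N_C(C_0)=\left(0^+(C)\right)^0$. The inclusion $N_C(\mathbb{R}^n)\subseteq\left(0^+(C)\right)^0$ is immediate from the definitions: if $x^*\in N_C(x)$ for some $x\in C$, then for every $d\in 0^+(C)$ one has $x+d\in C$, hence $\langle d,x^*\rangle=\langle(x+d)-x,x^*\rangle\le 0$, so $x^*\in\left(0^+(C)\right)^0$. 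For the reverse inclusion $\left(0^+(C)\right)^0\subseteq N_C(C_0)$, I would argue that $N_C(C_0)$ is already closed because $C_0$ is compact: the support function $\sigma_{C_0}=\delta_{C_0}^*$ is finite and continuous everywhere, so $\mathrm{barr}(C_0)=\mathbb{R}^n$ and $N_{C_0}(\mathbb{R}^n)=\mathrm{dom}(\partial\sigma_{C_0})=\mathbb{R}^n$; combined with $\mathrm{cl}~N_C(C_0)=\left(0^+(C)\right)^0$ this forces $N_C(C_0)$ to be closed and equal to the polar. Hence $N_C(\mathbb{R}^n)=\left(0^+(C)\right)^0$.

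Conversely, suppose \eqref{norm = pol} holds. I would like to produce a compact convex component and invoke $2\Rightarrow 1$ of Proposition~\ref{char Motzk}. The natural candidate is to intersect $C$ with a translate of $U\oplus\cdots$, or more directly to take $C_0$ to be a suitable compact ``slice''; however, the cleanest route is to observe that \eqref{norm = pol} already says $N_C(\mathbb{R}^n)$ equals the polar, which is closed and convex. One then needs a compact convex $C_0\subseteq C$ with $N_C(C_0)=N_C(\mathbb{R}^n)$, at which point Proposition~\ref{char Motzk} delivers $M$-decomposability with that $C_0$. I expect \textbf{this construction of $C_0$ to be the main obstacle}: one must exhibit a compact convex subset of $C$ whose normal cones already exhaust the entire total normal cone. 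A promising approach is to set $C_0:=C\cap(a+L^\perp)$ truncated appropriately, or to use the generalized Minkowski decomposition, but the decisive point is that since every normal vector $x^*\in\left(0^+(C)\right)^0$ attains its supremum over $C$ (as $x^*\in\mathrm{barr}~C$ and $\partial\sigma_C(x^*)\neq\emptyset$), the supremum is attained at points that can be confined to a bounded region, and taking $C_0$ to be the closed convex hull of such minimizers (kept compact by the attainment) yields $N_C(C_0)=N_C(\mathbb{R}^n)$. Once $C_0$ is in hand, the equivalence follows directly from Proposition~\ref{char Motzk}, completing the proof.
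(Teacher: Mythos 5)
Your forward implication is essentially right, but the justification of its key step is a non sequitur. To prove $\left( 0^{+}\left( C\right) \right) ^{0}\subseteq N_{C}(C_{0})$ you argue that $N_{C}(C_{0})$ is closed because $\sigma _{C_{0}}$ is finite everywhere and hence $N_{C_{0}}(\mathbb{R}^{n})=\mathrm{dom}(\partial \sigma _{C_{0}})=\mathbb{R}^{n}$. But $N_{C_{0}}(\mathbb{R}^{n})$, the total normal cone of the compact set $C_{0}$, and $N_{C}(C_{0})=\bigcup_{x\in C_{0}}N_{C}(x)$, the union of the normal cones of $C$ at points of $C_{0}$, are different objects; that the first equals all of $\mathbb{R}^{n}$ says nothing about closedness of the second. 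Closedness of $N_{C}(C_{0})$ does in fact hold (the graph of $N_{C}$ is closed, so a convergent sequence $x_{k}^{\ast }\in N_{C}(x_{k})$ with $x_{k}\in C_{0}$ has, by compactness of $C_{0}$, its limit in $N_{C}(x)$ for some $x\in C_{0}$), but the cleaner fix, and the one the paper uses, avoids closures entirely: given $x^{\ast }\in \left( 0^{+}\left( C\right) \right) ^{0}$, pick $x\in C_{0}$ maximizing $\left\langle x^{\ast },\cdot \right\rangle $ over the compact set $C_{0}$; then for every $c\in C_{0}$ and $d\in 0^{+}\left( C\right) $ one has $\left\langle x^{\ast },c+d\right\rangle \leq \left\langle x^{\ast },x\right\rangle $, and since $C=C_{0}+0^{+}\left( C\right) $ this shows $x^{\ast }\in N_{C}(x)$ directly.

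The converse is where your proposal has a genuine gap, and you partly admit it. Everything reduces, as you say, to exhibiting a compact convex $C_{0}\subseteq C$ with $N_{C}(C_{0})=N_{C}(\mathbb{R}^{n})$, after which Proposition \ref{char Motzk} would finish the job; but your construction of $C_{0}$ is only asserted. The phrase ``the supremum is attained at points that can be confined to a bounded region'' is exactly what has to be proved, and the companion claim that the convex hull of the maximizers is ``kept compact by the attainment'' is false as stated: attainment does not bound argmax sets (for $C$ a closed half-plane in $\mathbb{R}^{2}$ and $x^{\ast }$ its outer unit normal, the set of maximizers is the whole boundary line), and attainment for each individual $x^{\ast }\in \left( 0^{+}\left( C\right) \right) ^{0}$ gives no a priori bounded selection of maximizers as $x^{\ast }$ ranges over the unit vectors of the polar cone; ruling out maximizers that escape to infinity is precisely the content of this implication. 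The paper fills this hole with two steps absent from your proposal: it defines $C_{0}:=N_{C}^{-1}\left( S^{n-1}\cap \left( 0^{+}\left( C\right) \right) ^{0}\right) $, deduces its compactness from the upper semicontinuity of $N_{C}^{-1}=\partial \delta _{C}^{\ast }$ together with the compactness of $S^{n-1}\cap \left( 0^{+}\left( C\right) \right) ^{0}$, and then proves $C=C_{0}+0^{+}\left( C\right) $ by a projection argument: for $x\in C$, the projection $\overline{x}$ of $x$ onto $C_{0}+0^{+}\left( C\right) $ satisfies $x-\overline{x}\in \left( 0^{+}\left( C\right) \right) ^{0}=N_{C}(\mathbb{R}^{n})$, hence $x-\overline{x}\in N_{C}(\widehat{x})$ for some $\widehat{x}\in C$, and a short computation (using that $\widehat{x}\in C_{0}$ whenever $x\neq \overline{x}$) forces $x=\overline{x}$; taking convex hulls then yields the decomposition without routing back through Proposition \ref{char Motzk} at all. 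Your closing appeal to Proposition \ref{char Motzk} $2\Rightarrow 1$ would indeed be fine once such a $C_{0}$ is in hand, but producing it is the theorem, not a detail.
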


\begin{proof}
We first observe that the inclusion $\subseteq $ in (\ref{norm = pol}) holds
for every closed convex set $C.$ Assume now that $C$ is Motzkin decomposable
and $C_{0}$ is a compact component of $C,$ and let $x^{\ast }\in 0^{+}\left(
C\right) .$ Since $C_{0}$ is compact, we have $x^{\ast }\in N_{C_{0}}(x)$
for some $x\in C_{0}.$ Then, for every $c\in C_{0}$ and $d\in 0^{+}\left(
C\right) ,$ we have%
\begin{equation*}
\left\langle x^{\ast },c+d\right\rangle =\left\langle x^{\ast
},c\right\rangle +\left\langle x^{\ast },d\right\rangle \leq \left\langle
x^{\ast },x\right\rangle ,
\end{equation*}%
which shows that $x^{\ast }\in N_{C}(x)\subseteq N_{C}(\mathbb{R}^{n}),$
thus completing the proof of (\ref{norm = pol}).

Conversely, assume that (\ref{norm = pol}) holds and define%
\begin{equation*}
C_{0}:=N_{C}^{-1}\left( S^{n-1}\cap \left( 0^{+}\left( C\right) \right)
^{0}\right) .
\end{equation*}%
The mapping\ $N_{C}^{-1}$ is upper semicontinuous, since $N_{C}^{-1}=\left(
\partial \delta _{C}\right) ^{-1}=\partial \delta _{C}^{\ast }$;\ therefore,
as $S^{n-1}\cap \left( 0^{+}\left( C\right) \right) ^{0}$ is compact, $C_{0}$
is compact, too. We will now prove the equality%
\begin{equation}
C=C_{0}+0^{+}\left( C\right) .  \label{C Motzk}
\end{equation}%
The inclusion $\subseteq $ is immediate, since $C_{0}\subseteq C.$ To prove
the opposite inequality, let $x\in C$ and $\overline{x}$ denote the
projection of $x$ onto $C_{0}+0^{+}\left( C\right) .$ Then, using (\ref{norm
= pol}), we obtain $x-\overline{x}\in N_{C_{0}+0^{+}\left( C\right) .}\left( 
\overline{x}\right) \subseteq \left( 0^{+}\left( C\right) \right) ^{0}=N_{C}(%
\mathbb{R}^{n}),$ so that%
\begin{equation}
x-\overline{x}\in N_{C}(\widehat{x})\text{ for some }\widehat{x}\in C.
\label{diff normal}
\end{equation}%
Since $\overline{x}\in C,$ by (\ref{diff normal}) we have%
\begin{equation}
\left\langle x-\overline{x},\overline{x}-\widehat{x}\right\rangle \leq 0.
\label{ineq}
\end{equation}%
We claim that $x=\overline{x}.$ Suppose, towards a cotradiction, that $x\neq 
\overline{x}.$ Then%
\begin{equation*}
\frac{x-\overline{x}}{\left\Vert x-\overline{x}\right\Vert }\in S^{n-1}\cap
\left( 0^{+}\left( C\right) \right) ^{0},
\end{equation*}%
which implies that $\widehat{x}\in C_{0}.$ Therefore, $\left\langle x-%
\overline{x},\widehat{x}-\overline{x}\right\rangle \leq 0,$ which, together
with (\ref{ineq}), yields $\left\langle x-\overline{x},\overline{x}%
\right\rangle =\left\langle x-\overline{x},\widehat{x}\right\rangle .$ We
thus have%
\begin{equation*}
\left\Vert x-\overline{x}\right\Vert ^{2}=\left\langle x-\overline{x},x-%
\overline{x}\right\rangle =\left\langle x-\overline{x},x-\widehat{x}%
\right\rangle \leq 0,
\end{equation*}%
the latter inequality following from (\ref{diff normal}). So, we conclude
that%
\begin{equation*}
x=\overline{x}\in C_{0}+0^{+}\left( C\right) ,
\end{equation*}%
which completes the proof of (\ref{C Motzk}). Taking convex hulls in both
sides of (\ref{C Motzk}), we obtain $C=\mathrm{conv}\left( C_{0}+0^{+}\left(
C\right) \right) =\mathrm{conv}\left( C_{0}\right) +0^{+}\left( C\right) .$
Since $\mathrm{conv}\left( C_{0}\right) $ is a compact convex set, this
shows that $C$ is Motzkin decomposable.
\end{proof}

\bigskip

Recall that the recession function $f0^{+}:\mathbb{R}^{n}\longrightarrow 
\mathbb{R\cup }\left\{ +\infty \right\} $ of a l.s.c. proper convex function 
$f:\mathbb{R}^{n}\longrightarrow \mathbb{R\cup }\left\{ +\infty \right\} $
is the l.s.c. proper sublinear function defined by $\mathrm{epi~}%
f0^{+}=0^{+}\left( \mathrm{epi~}f\right) .$ We omit the easy proof of the
following Proposition.

\begin{proposition}
\label{pol epi subl}If $s:\mathbb{R}^{n}\longrightarrow \mathbb{R\cup }%
\left\{ +\infty \right\} $ is a l.s.c. proper sublinear function, then%
\begin{equation*}
\left( \mathrm{epi~}s\right) ^{0}=\left( \left( \mathrm{dom~}s\right)
^{0}\times \left\{ 0\right\} \right) \cup \mathbb{R}_{+}\left( \partial
s\left( 0\right) \times \left\{ -1\right\} \right) .
\end{equation*}
\end{proposition}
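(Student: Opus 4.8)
The plan is to unfold the definition of the polar cone of the epigraph and then to split the analysis according to the sign of the last coordinate. Write a generic element of $(\mathrm{epi}~s)^{0}$ as $(x^{\ast },\lambda )\in \mathbb{R}^{n}\times \mathbb{R}$; by definition it satisfies $\langle x^{\ast },x\rangle +\lambda t\leq 0$ for every $(x,t)\in \mathrm{epi}~s$, i.e. for every $x\in \mathrm{dom}~s$ and every $t\geq s(x)$. First I would observe that letting $t\to +\infty$ (which is possible since $\mathrm{dom}~s\neq \emptyset $) forces $\lambda \leq 0$. This restricts attention to the two regimes $\lambda =0$ and $\lambda <0$, which will produce the two pieces of the claimed union, after which I verify the reverse inclusions by direct substitution.

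In the regime $\lambda =0$ the inequality reduces to $\langle x^{\ast },x\rangle \leq 0$ for all $x\in \mathrm{dom}~s$, which is exactly the statement $x^{\ast }\in (\mathrm{dom}~s)^{0}$; conversely any such $x^{\ast }$ clearly gives $(x^{\ast },0)\in (\mathrm{epi}~s)^{0}$ since the last coordinate is annihilated. This accounts for the factor $(\mathrm{dom}~s)^{0}\times \{0\}$.

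In the regime $\lambda <0$, writing $\lambda =-\mu $ with $\mu >0$, the binding value of $t$ is $t=s(x)$ (the inequality for larger $t$ then holds automatically because $\mu >0$), so the constraint becomes $\langle x^{\ast },x\rangle \leq \mu \,s(x)$ for all $x$, equivalently $\langle \mu ^{-1}x^{\ast },x\rangle \leq s(x)$ for all $x\in \mathbb{R}^{n}$ (the bound being vacuous off $\mathrm{dom}~s$). Since $s$ is sublinear we have $s(0)=0$, whence $\partial s(0)=\{y^{\ast }:\langle y^{\ast },x\rangle \leq s(x)\ \forall x\}$; therefore the last condition says precisely $\mu ^{-1}x^{\ast }\in \partial s(0)$, i.e. $(x^{\ast },\lambda )=\mu (\mu ^{-1}x^{\ast },-1)\in \mathbb{R}_{++}(\partial s(0)\times \{-1\})$. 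The converse in this regime is the direct check that for $y^{\ast }\in \partial s(0)$, $\mu \geq 0$ and $(x,t)\in \mathrm{epi}~s$ one has $\mu (\langle y^{\ast },x\rangle -t)\leq \mu (s(x)-t)\leq 0$. The leftover value $\mu =0$ contributes only the origin $(0,0)$, which already lies in $(\mathrm{dom}~s)^{0}\times \{0\}$; this is exactly why $\mathbb{R}_{+}$ rather than $\mathbb{R}_{++}$ appears in the statement, and assembling the two regimes yields the asserted equality.

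The computation is essentially routine, so I do not expect a serious obstacle; the two points that require a little care are the reduction to the binding constraint $t=s(x)$ when $\lambda <0$ and the identification of $\partial s(0)$ with the set of linear minorants of $s$, which both rest on the normalization $s(0)=0$ coming from sublinearity. It is worth noting that properness of $s$ guarantees $\partial s(0)\neq \emptyset $, so the second term of the union is genuinely nonempty and the formula is well posed.
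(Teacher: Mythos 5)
Your proof is correct: the reduction to $\lambda\le 0$, the split into the regimes $\lambda=0$ and $\lambda<0$, and the identification $\partial s(0)=\{y^{\ast}:\langle y^{\ast},x\rangle\le s(x)\ \forall x\}$ via $s(0)=0$ (which indeed uses lower semicontinuity and properness together with positive homogeneity, all available here) give exactly the stated equality, including the correct treatment of the boundary case $\mu=0$. The paper omits its own proof of this proposition, declaring it easy, and your direct unfolding of the polar-cone definition is precisely the routine verification the authors intended, so there is no divergence of approach to report.
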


Applying Proposition \ref{char Motzk 2} to the epigraph of a l.s.c. proper
convex function yields the following corollary.

\begin{corollary}
\label{char Motzk funct}A l.s.c. proper convex function $f:\mathbb{R}%
^{n}\longrightarrow \mathbb{R\cup }\left\{ +\infty \right\} $ is Motzkin
decomposable if and only if $\left( \mathrm{dom}~f0^{+}\right) ^{0}=N_{%
\mathrm{dom}~f}(\mathbb{R}^{n})$ and $\partial f0^{+}\left( 0\right)
=\partial f(\mathbb{R}^{n}).$
\end{corollary}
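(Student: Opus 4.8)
The plan is to reduce the statement to the set-level characterization of Proposition \ref{char Motzk 2} applied to the epigraph $\mathrm{epi}~f\subseteq\mathbb{R}^{n+1}$. By definition a function is Motzkin decomposable precisely when its epigraph is a Motzkin decomposable set, so Proposition \ref{char Motzk 2} tells us that $f$ is Motzkin decomposable if and only if $N_{\mathrm{epi}~f}(\mathbb{R}^{n+1})=\left(0^{+}(\mathrm{epi}~f)\right)^{0}$. Using the defining identity $\mathrm{epi}~f0^{+}=0^{+}(\mathrm{epi}~f)$, this equality becomes $N_{\mathrm{epi}~f}(\mathbb{R}^{n+1})=\left(\mathrm{epi}~f0^{+}\right)^{0}$, and the whole proof then consists in computing both sides and reading off the two announced conditions.

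For the right-hand side I would invoke Proposition \ref{pol epi subl} with $s:=f0^{+}$, which is legitimate since $f0^{+}$ is an l.s.c.\ proper sublinear function. This gives
\[
\left(\mathrm{epi}~f0^{+}\right)^{0}=\left(\left(\mathrm{dom}~f0^{+}\right)^{0}\times\{0\}\right)\cup\mathbb{R}_{+}\left(\partial f0^{+}(0)\times\{-1\}\right).
\]
For the left-hand side I would compute the total normal cone of the epigraph directly. A vector $(x^{\ast},\beta)$ normal to $\mathrm{epi}~f$ at a point $(x,t)$ must satisfy $\beta\leq 0$ (let the vertical coordinate tend to $+\infty$). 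If $\beta=0$, the defining inequality reduces to $x^{\ast}\in N_{\mathrm{dom}~f}(x)$; if $\beta=-\lambda<0$, testing the inequality at $y=x$ forces $t=f(x)$, and the inequality then says exactly $x^{\ast}/\lambda\in\partial f(x)$. Taking the union over all points of the epigraph yields
\[
N_{\mathrm{epi}~f}(\mathbb{R}^{n+1})=\left(N_{\mathrm{dom}~f}(\mathbb{R}^{n})\times\{0\}\right)\cup\mathbb{R}_{++}\left(\partial f(\mathbb{R}^{n})\times\{-1\}\right),
\]
the first piece collecting the $\beta=0$ normals and the second the $\beta<0$ ones. I expect this computation, together with the verification that the unions over $x$ produce exactly $N_{\mathrm{dom}~f}(\mathbb{R}^{n})$ and $\partial f(\mathbb{R}^{n})$, to be the main technical step.

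Finally I would compare the two expressions coordinatewise in the last variable. Each side splits into a piece lying in the hyperplane $\{\beta=0\}$ and a cone lying in the open half-space $\{\beta<0\}$ (apart from the origin, which belongs to the $\{\beta=0\}$ piece on both sides). Intersecting with $\{\beta=0\}$ forces $N_{\mathrm{dom}~f}(\mathbb{R}^{n})=\left(\mathrm{dom}~f0^{+}\right)^{0}$, while intersecting with the slice $\{\beta=-1\}$ forces $\partial f(\mathbb{R}^{n})=\partial f0^{+}(0)$; conversely, these two equalities clearly give back the full equality, the only discrepancy between $\mathbb{R}_{++}$ and $\mathbb{R}_{+}$ being the origin, which is already present in the $\{\beta=0\}$ piece and hence harmless. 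The only delicate points are keeping track of which epigraph points contribute $\beta<0$ normals (exactly the graph points $t=f(x)$) and checking that the origin does not spoil the slice-by-slice comparison.
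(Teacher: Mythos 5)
Your proof is correct and follows essentially the same route as the paper: the paper likewise reduces to Proposition \ref{char Motzk 2} applied to $C:=\mathrm{epi}~f$ together with Proposition \ref{pol epi subl} applied to $s:=f0^{+}$, and then reads off the two conditions by comparing the resulting descriptions. The only difference is that the paper quotes the formula for $N_{\mathrm{epi}~f}\left( \mathbb{R}^{n+1}\right)$ from \cite[Lemma 27]{MaLe-Pi-1} (stated there with $\mathbb{R}_{+}$ rather than your $\mathbb{R}_{++}$, an immaterial discrepancy since the origin already lies in $N_{\mathrm{dom}~f}(\mathbb{R}^{n})\times \left\{ 0\right\}$), whereas you derive that formula directly; both your derivation and your slice-by-slice comparison are sound.
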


\begin{proof}
Acoording to \cite[Lemma 27]{MaLe-Pi-1}, we have%
\begin{equation*}
N_{\mathrm{epi~}f}\left( \mathbb{R}^{n+1}\right) =\left( N_{\mathrm{dom}~f}(%
\mathbb{R}^{n})\times \left\{ 0\right\} \right) \cup \mathbb{R}_{+}\left(
\partial f(\mathbb{R}^{n})\times \left\{ -1\right\} \right) .
\end{equation*}%
Hence, the equivalence easily follows from combining Proposition \ref{char
Motzk 2}, applied to $C:=\mathrm{epi~}f$, with Proposition \ref{pol epi subl}%
, applied to $s:=f0^{+}.$
\end{proof}

\begin{corollary}
A globally Lipschitzian convex function $f:\mathbb{R}^{n}\longrightarrow 
\mathbb{R}$ is Motzkin decomposable if and only if $\partial f0^{+}\left(
0\right) =\partial f(\mathbb{R}^{n}).$
\end{corollary}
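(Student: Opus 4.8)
The plan is to derive this as an immediate specialization of Corollary \ref{char Motzk funct}, by showing that for a globally Lipschitzian convex function the first of the two conditions appearing there is automatically satisfied, so that Motzkin decomposability reduces precisely to the second condition $\partial f0^{+}(0)=\partial f(\mathbb{R}^{n})$.

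First I would record that a globally Lipschitzian convex function $f:\mathbb{R}^{n}\longrightarrow \mathbb{R}$ is finite-valued, hence continuous, l.s.c. and proper, so that Corollary \ref{char Motzk funct} applies; moreover $\mathrm{dom}~f=\mathbb{R}^{n}$. The strategy is then to evaluate both sides of the first condition $\left(\mathrm{dom}~f0^{+}\right)^{0}=N_{\mathrm{dom}~f}(\mathbb{R}^{n})$ under this hypothesis and observe that each side collapses to $\{0\}$.

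Next I would check the two ingredients separately. Since $\mathrm{dom}~f=\mathbb{R}^{n}$, every point of the domain is interior, so $N_{\mathrm{dom}~f}(x)=\{0\}$ for each $x$ and therefore $N_{\mathrm{dom}~f}(\mathbb{R}^{n})=\{0\}$. For the other side, let $L$ be a Lipschitz constant for $f$; then for any $x\in\mathbb{R}^{n}$, $d\in\mathbb{R}^{n}$ and $t>0$ one has $\tfrac{f(x+td)-f(x)}{t}\leq L\|d\|$, and since the recession function is the supremum over $t>0$ of these difference quotients, $f0^{+}(d)\leq L\|d\|<\infty$ for every $d$. Hence $\mathrm{dom}~f0^{+}=\mathbb{R}^{n}$ and $\left(\mathrm{dom}~f0^{+}\right)^{0}=(\mathbb{R}^{n})^{0}=\{0\}$.

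Combining these, the first condition of Corollary \ref{char Motzk funct} reads $\{0\}=\{0\}$ and holds unconditionally, so that corollary reduces to the sole equivalence between Motzkin decomposability of $f$ and the equality $\partial f0^{+}(0)=\partial f(\mathbb{R}^{n})$, which is exactly the claim. The only step requiring genuine care — and the nearest thing to an obstacle — is the finiteness of $f0^{+}$ on all of $\mathbb{R}^{n}$; this is precisely where \emph{global} (rather than merely local) Lipschitz continuity enters, and it is what forces $\left(\mathrm{dom}~f0^{+}\right)^{0}$ down to $\{0\}$.
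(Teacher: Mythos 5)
Your proof is correct and follows essentially the same route as the paper: both deduce the statement from Corollary \ref{char Motzk funct} by observing that, for a globally Lipschitzian $f$, the two sides of the first condition, $\left( \mathrm{dom}~f0^{+}\right) ^{0}$ and $N_{\mathrm{dom}~f}(\mathbb{R}^{n})$, each collapse to $\left\{ 0\right\} $. The only (cosmetic) difference is that you prove the finiteness of $f0^{+}$ by hand from the Lipschitz bound on the difference quotients, whereas the paper simply cites \cite[Theorem 10.5]{R70} for this fact.
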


\begin{proof}
By \cite[Theorem 10.5]{R70}, the recession function $f0^{+}$ is finite
everywhere; hence, $\left( \mathrm{dom}~f0^{+}\right) ^{0}=\left( \mathbb{R}%
^{n}\right) ^{0}=\left\{ 0\right\} .$ On the other hand, since $f$ is finite
everywhere too, we have $N_{\mathrm{dom}~f}(\mathbb{R}^{n})=N_{\mathbb{R}%
^{n}}(\mathbb{R}^{n})=\left\{ 0\right\} .$ Therefore, the equivalence is an
immediate consequence of Corollary \ref{char Motzk funct}.
\end{proof}

\section{Generalized Minkowski sets}

In this section we introduce the notions of generalized Minkowski set and
generalized Minkowski function and provide several characterizations of such
new notions. In this respect, several auxiliary results are also proved.

\begin{definition}
A nonempty closed convex set $C\subseteq \mathbb{R}^{n}$ which is the convex
hull of its minimal faces is called a \emph{generalized Minkowski set}, that
is, if%
\begin{equation}
C=\mathrm{conv~}MF\left( C\right) .  \label{eq10.07.17.1}
\end{equation}
\end{definition}

\begin{theorem}
\label{th24.02.15.1}Let $C\subseteq \mathbb{R}^{n}$ be a nonempty closed
convex set and $U\subseteq \mathbb{R}^{n}$ be a supplementary subspace to $%
\mathrm{lin}~C$. Then, $C$ is a generalized Minkowski set if and only if $%
C\cap U$ is a Minkowski set.

In particular, $C$ is a generalized Minkowski set if and only if the
orthogonal slice $C\cap (\mathrm{lin}~C)^{\perp }$ is a Minkowski set.
\end{theorem}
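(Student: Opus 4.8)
The plan is to reduce the equivalence to two structural identities followed by a one-line ``slicing'' argument, so that no genuinely hard step remains. Write $D:=C\cap U$, a nonempty closed convex set (nonempty because any $c\in C$ splits as $c=c_U+c_l$ with $c_U\in U$, $c_l\in\mathrm{lin}~C$, and $c_U=c-c_l\in C+\mathrm{lin}~C=C$ lies in $D$). Note $\mathrm{lin}~D=\{0\}$, since $\mathrm{lin}~D\subseteq\mathrm{lin}~C\cap U=\{0\}$ by the direct sum $U\oplus\mathrm{lin}~C=\mathbb{R}^n$; hence ``$D$ is a Minkowski set'' means precisely $D=\mathrm{conv}(\mathrm{ext}~D)$. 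The two identities I would record first are the decomposition
\[
C=D+\mathrm{lin}~C
\]
(the inclusion $\supseteq$ is $C+\mathrm{lin}~C=C$, while $\subseteq$ is exactly the splitting above), and Corollary~\ref{LDF}, which gives $MF(C)=\mathrm{ext}(D)+\mathrm{lin}~C$.

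Combining Corollary~\ref{LDF} with the elementary fact $\mathrm{conv}(A+B)=\mathrm{conv}~A+\mathrm{conv}~B$ and the convexity of the subspace $\mathrm{lin}~C$ yields
\[
\mathrm{conv}~MF(C)=\mathrm{conv}(\mathrm{ext}(D))+\mathrm{lin}~C .
\]
The forward implication is then immediate: if $D$ is Minkowski, then
\[
C=D+\mathrm{lin}~C=\mathrm{conv}(\mathrm{ext}(D))+\mathrm{lin}~C=\mathrm{conv}~MF(C),
\]
so $C$ is a generalized Minkowski set (and the resulting set is automatically closed, being equal to $C$).

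For the converse I would intersect with $U$. Assuming $C=\mathrm{conv}~MF(C)=\mathrm{conv}(\mathrm{ext}(D))+\mathrm{lin}~C$, the key claim is
\[
\bigl(\mathrm{conv}(\mathrm{ext}(D))+\mathrm{lin}~C\bigr)\cap U=\mathrm{conv}(\mathrm{ext}(D)).
\]
Here $\supseteq$ holds because $\mathrm{conv}(\mathrm{ext}(D))\subseteq D\subseteq U$; and for $\subseteq$, if $a+l\in U$ with $a\in\mathrm{conv}(\mathrm{ext}(D))\subseteq U$ and $l\in\mathrm{lin}~C$, then $l=(a+l)-a\in U\cap\mathrm{lin}~C=\{0\}$, so the point equals $a\in\mathrm{conv}(\mathrm{ext}(D))$. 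Since $C\cap U=D$, intersecting the hypothesis with $U$ now gives $D=\mathrm{conv}(\mathrm{ext}(D))$, i.e. $D$ is a Minkowski set.

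The ``in particular'' statement follows by specializing to $U=(\mathrm{lin}~C)^{\perp}$, which is a supplementary subspace to $\mathrm{lin}~C$. I do not anticipate a serious obstacle: the whole argument rests on Corollary~\ref{LDF} (already established, so the nontrivial face-theoretic content is imported) together with the transversality $U\cap\mathrm{lin}~C=\{0\}$, which is precisely what lets the slice by $U$ recover $C\cap U$ and strip off the lineality cleanly. The only points meriting care are the distributivity of $\mathrm{conv}$ over Minkowski sums and the verification that the slice of $\mathrm{conv}(\mathrm{ext}(D))+\mathrm{lin}~C$ returns $\mathrm{conv}(\mathrm{ext}(D))$ exactly; everything else is routine.
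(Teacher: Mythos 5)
Your proposal is correct and follows essentially the same route as the paper: both directions rest on Corollary~\ref{LDF}, the decomposition $C=C\cap U+\mathrm{lin}~C$, the additivity of $\mathrm{conv}$ over Minkowski sums, and the transversality $U\cap\mathrm{lin}~C=\{0\}$. Your converse merely repackages the paper's pointwise argument (expanding $x\in C\cap U$ as a convex combination from minimal faces and killing the $\mathrm{lin}~C$ component) as the set-level identity $\left(\mathrm{conv}(\mathrm{ext}(D))+\mathrm{lin}~C\right)\cap U=\mathrm{conv}(\mathrm{ext}(D))$, which is a slightly cleaner presentation of the same idea.
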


\begin{proof}
Assume that $C\cap U$ is a Minkowski set. This quality of $C\cap U$ combined
with \cite[p. 65]{R70} and the additivity of the $\mathrm{conv}$ operator
and Corollary \ref{LDF} leads us to 
\begin{equation*}
\begin{array}{lll}
C=C\cap U+\mathrm{lin}~C & =\mathrm{conv~ext}\left( C\cap U\right) +\mathrm{%
lin}~C &  \\ 
& =\mathrm{conv~ext}\left( C\cap U\right) +\mathrm{conv}~\mathrm{lin}~C & 
\\ 
& =\mathrm{conv}\left( \mathrm{ext}\left( C\cap U\right) +\mathrm{lin}%
~C\right) &  \\ 
& =\mathrm{conv~}MF\left( C\right) . & 
\end{array}%
\end{equation*}

Conversely, assume that $C=\mathrm{conv~}MF\left( C\right) $ and let $x\in
C\cap U$. Since $x\in C$, it follows that $x=\lambda _{1}x_{1}+\cdots
+\lambda _{m}x_{m}$ for some $\lambda _{1},\ldots ,\lambda _{m}\in \lbrack
0,1]$ such that $\lambda _{1}+\cdots +\lambda _{m}=1$ and some $x_{1}\in
F_{1},\ldots ,x_{m}\in F_{m}$, where%
\begin{equation*}
F_{1}=x_{F_{1}}+\mathrm{lin}~C,\ldots ,F_{m}=x_{F_{m}}+\mathrm{lin}~C
\end{equation*}%
are lowest dimensional faces of $C$ and $x_{F_{1}},\ldots ,x_{F_{m}}\in 
\mathrm{ext}\left( C\cap U\right) $ (see Theorem \ref{prop05.06.17.1}).
Consequently, we have $x_{1}=x_{F_{1}}+p_{_{l_{1}}},\ldots
,x_{m}=x_{F_{m}}+p_{_{l_{m}}}$ for some $p_{l_{1}},\ldots ,p_{l_{m}}\in 
\mathrm{lin}~C$ and therefore $x=\lambda _{1}x_{F{1}}+\cdots +\lambda
_{m}x_{F_{m}}+\lambda _{1}p_{_{l_{1}}}+\cdots +\lambda _{m}p_{_{l_{m}}}$,
namely $U{\ni x-\lambda _{1}x_{F_{1}}-\cdots -\lambda _{m}x_{F_{m}}=\lambda
_{1}p_{_{l_{1}}}+\cdots +\lambda _{m}p_{_{l_{m}}}\in \mathrm{lin}~C,}$
which\ shows\ that ${\lambda _{1}p_{_{l_{1}}}+\cdots +\lambda
_{m}p_{_{l_{m}}}}\in U\cap \left( \mathrm{lin}~C\right) =\{0\}$ and\
therefore%
\begin{equation*}
x={\lambda _{1}x_{F{1}}+\cdots +\lambda _{m}x_{F_{m}}}\in \mathrm{conv~ext}%
\left( C\cap U\right) .\ 
\end{equation*}%
Thus,\ the\ inclusion$\ C\cap U\subseteq \mathrm{conv~ext}\left( C\cap
U\right) $ is\ now\ completely\ done,\ and\ the\ opposite\ inclusion\ is\
obvious.
\end{proof}

\bigskip

Recall that the linearity of a l.s.c. proper convex function $f:\mathbb{R}%
^{n}\longrightarrow \mathbb{R\cup }\left\{ +\infty \right\} $ is the linear
subspace%
\begin{equation*}
\mathrm{lin}~f:=\left\{ d\in \mathbb{R}^{n}:f0^{+}\left( -d\right)
=-f0^{+}\left( d\right) \right\} .
\end{equation*}%
The subspace $\mathrm{lin}~f$ consists of those directions in which $f$ is
affine. By \cite[Theorem 4.8]{R70}, the recession function $f0^{+}$ is
linear on $\mathrm{lin}~f$ and, in view of \cite[Theorem 8.8]{R70}, one has%
\begin{equation}
graph~\left( f0^{+}\right) _{|~\mathrm{lin}~f}=\mathrm{lin}~\mathrm{epi~}f.
\label{graph = lin}
\end{equation}%
Taking all this into account, one easily obtains the following corollary.

\begin{corollary}
Let $f:\mathbb{R}^{n}\longrightarrow \mathbb{R\cup }\left\{ +\infty \right\} 
$ be a l.s.c. proper convex function and $U$ be a supplementary subspace to $%
\mathrm{lin}~f.$ Then, $f$ is a generalized Minkowski function if and only
if $f_{|U}$ is a Minkowski function.

In particular, $f$ is a generalized Minkowski function if and only if $%
f_{|\left( \mathrm{lin}~f\right) ^{\bot }}$ is a Minkowski function.
\end{corollary}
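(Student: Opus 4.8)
The plan is to reduce the statement to Theorem~\ref{th24.02.15.1} applied to the epigraph of $f$ in $\mathbb{R}^{n+1}$. By definition, $f$ is a generalized Minkowski function precisely when $\mathrm{epi}~f$ is a generalized Minkowski set, and $f_{|U}$ is a Minkowski function precisely when its epigraph $\mathrm{epi}\left(f_{|U}\right)$ is a Minkowski set. Thus the whole equivalence will follow once I exhibit a subspace $W\subseteq\mathbb{R}^{n+1}$ supplementary to $\mathrm{lin}~\mathrm{epi}~f$ for which $\mathrm{epi}~f\cap W=\mathrm{epi}\left(f_{|U}\right)$, since then Theorem~\ref{th24.02.15.1}, applied with $C=\mathrm{epi}~f$ and this $W$, directly yields the claim.

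First I would take $W:=U\times\mathbb{R}$ as the candidate supplementary subspace. By \eqref{graph = lin} one has $\mathrm{lin}~\mathrm{epi}~f=\mathrm{graph}\left(f0^{+}\right)_{|~\mathrm{lin}~f}$, so every element of $\mathrm{lin}~\mathrm{epi}~f$ has the form $\left(d,\left(f0^{+}\right)(d)\right)$ with $d\in\mathrm{lin}~f$. If such an element also lies in $W=U\times\mathbb{R}$, then $d\in U\cap\mathrm{lin}~f=\{0\}$, forcing the element to be $0$; hence $W\cap\mathrm{lin}~\mathrm{epi}~f=\{0\}$. Combined with the dimension count $\dim W+\dim\mathrm{lin}~\mathrm{epi}~f=(\dim U+1)+\dim\mathrm{lin}~f=n+1$, this shows that $W$ is indeed supplementary to $\mathrm{lin}~\mathrm{epi}~f$ in $\mathbb{R}^{n+1}$.

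Next I would identify the slice. A pair $(x,t)$ belongs to $\mathrm{epi}~f\cap W$ exactly when $x\in U$ and $t\geq f(x)$, which is precisely the condition defining $\mathrm{epi}\left(f_{|U}\right)$; hence $\mathrm{epi}~f\cap W=\mathrm{epi}\left(f_{|U}\right)$. Applying Theorem~\ref{th24.02.15.1} then gives that $\mathrm{epi}~f$ is a generalized Minkowski set if and only if $\mathrm{epi}~f\cap W=\mathrm{epi}\left(f_{|U}\right)$ is a Minkowski set, i.e.\ $f$ is a generalized Minkowski function if and only if $f_{|U}$ is a Minkowski function. The ``in particular'' assertion follows immediately by specializing to $U=\left(\mathrm{lin}~f\right)^{\bot}$, which is always a supplementary subspace to $\mathrm{lin}~f$.

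The only delicate point is the verification that $W=U\times\mathbb{R}$ is genuinely supplementary to $\mathrm{lin}~\mathrm{epi}~f$; everything there rests on the description \eqref{graph = lin} of $\mathrm{lin}~\mathrm{epi}~f$ as the graph over $\mathrm{lin}~f$ of the linear part of the recession function, together with the transversality $U\cap\mathrm{lin}~f=\{0\}$. Once this is secured, the identification of the slice and the invocation of Theorem~\ref{th24.02.15.1} are routine.
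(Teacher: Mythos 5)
Your proposal is correct and follows essentially the same route as the paper's own proof: both reduce to Theorem \ref{th24.02.15.1} applied to $C=\mathrm{epi}~f$ with the supplementary subspace $U\times \mathbb{R}$, using \eqref{graph = lin} to identify $\mathrm{lin}~\mathrm{epi}~f$ and the equality $\left( \mathrm{epi}~f\right) \cap \left( U\times \mathbb{R}\right) =\mathrm{epi}~f_{|U}$ to identify the slice. The only difference is that you spell out the verification (trivial intersection plus a dimension count) that $U\times \mathbb{R}$ is indeed supplementary to $\mathrm{lin}~\mathrm{epi}~f$, which the paper merely asserts.
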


\begin{proof}
Observe first that $U\times \mathbb{R}$ is a supplementary subspace to $%
graph~\left( f0^{+}\right) _{|~\mathrm{lin}~f}.$ Hence, by (\ref{graph = lin}%
) and the equality $\left( \mathrm{epi~}f\right) \cap \left( U\times \mathbb{%
R}\right) =\mathrm{epi~}f_{|U},$ it suffices to apply Theorem \ref%
{th24.02.15.1}.
\end{proof}

\bigskip

Another consequence of Theorem \ref{th24.02.15.1} is the next result.

\begin{theorem}
\label{char gM}Let $C\subseteq \mathbb{R}^{n}$ be a nonempty closed convex
set. Then, $C$ is a generalized Minkowski set if and only if there exist a
Minkowski set $C_{0}\subseteq \mathbb{R}^{n},$ a supplementary subspace $%
V\subseteq \mathbb{R}^{n}$ to $\mathrm{aff~}C_{0}-\mathrm{aff~}C_{0}$ and a
linear subspace $L\subseteq V$ such that%
\begin{equation}
C=C_{0}+L.  \label{Mink + subsp}
\end{equation}
\end{theorem}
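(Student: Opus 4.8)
The engine of the whole argument is Theorem \ref{th24.02.15.1}, which reduces being a generalized Minkowski set to the property that a slice $C\cap U$ (for $U$ supplementary to $\mathrm{lin}~C$) is a Minkowski set. So the plan is to match the decomposition $C=C_0+L$ of \eqref{Mink + subsp} against such a slice, with $L$ playing the role of $\mathrm{lin}~C$ and $C_0$ the role of $C\cap U$. Throughout I will use the elementary but essential fact that a nonempty Minkowski set has trivial lineality: it has extreme points, and no set containing a line has any, so $\mathrm{lin}~C_0=\{0\}$.

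For the forward implication, assume $C$ is a generalized Minkowski set and set $L:=\mathrm{lin}~C$, $U:=(\mathrm{lin}~C)^{\perp}$ and $C_0:=C\cap U$. By the ``in particular'' part of Theorem \ref{th24.02.15.1}, $C_0$ is a Minkowski set, and the slice decomposition $C=(C\cap U)+\mathrm{lin}~C$ (used already in the proof of Theorem \ref{th24.02.15.1}, cf. \cite[p.~65]{R70}) gives $C=C_0+L$. It then remains only to produce $V$. Since $C_0\subseteq(\mathrm{lin}~C)^{\perp}=L^{\perp}$, the subspace $W:=\mathrm{aff}~C_0-\mathrm{aff}~C_0$ lies in $L^{\perp}$, whence $W\cap L\subseteq L^{\perp}\cap L=\{0\}$. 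A routine linear-algebra extension (lift a basis of the image of $L$ in $\mathbb{R}^{n}/W$ to a full complement of $W$) then yields a subspace $V\supseteq L$ with $W\oplus V=\mathbb{R}^{n}$, which is the required supplementary subspace.

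For the converse, suppose $C=C_0+L$ with the stated data; after translating (which preserves both the Minkowski and the generalized Minkowski properties) I may assume $0\in C_0$, so that $\mathrm{aff}~C_0=W$ and $C_0\subseteq W$. The crux is to identify $\mathrm{lin}~C$ with $L$. The inclusion $L\subseteq\mathrm{lin}~C$ is immediate since $L$ is a subspace. For the reverse, take $d\in\mathrm{lin}~C$ and let $P_W$ be the projection onto $W$ along $V$; since $L\subseteq V$ one has $P_W(C)=C_0$, so $d\in\mathrm{lin}~C$ forces its $W$-component into $\mathrm{lin}~C_0=\{0\}$, i.e. $d\in V$; feeding this back into $C_0+d\subseteq C_0+L$ and using $W\cap V=\{0\}$ shows $d\in L$. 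Hence $\mathrm{lin}~C=L$. Now split $V=L\oplus V'$ and put $U:=W\oplus V'$, a supplement of $L=\mathrm{lin}~C$; a direct computation in the decomposition $\mathbb{R}^{n}=W\oplus V'\oplus L$ shows $C\cap U=C_0$, a Minkowski set, so Theorem \ref{th24.02.15.1} gives that $C$ is a generalized Minkowski set.

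The routine part is the linear algebra of the forward direction. The step I expect to require the most care is the identification $\mathrm{lin}~C=L$ together with the slice identity $C\cap U=C_0$ in the converse: both rest on the transversality $L\cap W=\{0\}$ (guaranteed by $L\subseteq V$ and $W\oplus V=\mathbb{R}^{n}$) and on $\mathrm{lin}~C_0=\{0\}$, and it is precisely here that the hypothesis ``$V$ supplementary to $\mathrm{aff}~C_0-\mathrm{aff}~C_0$'' is genuinely used rather than being merely decorative.
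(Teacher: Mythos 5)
Your proof is correct and follows essentially the same route as the paper's: both directions reduce to Theorem \ref{th24.02.15.1}, taking $C_{0}=C\cap (\mathrm{lin}~C)^{\perp }$, $L=\mathrm{lin}~C$ in the forward direction and, in the converse, using the projection onto $\mathrm{aff}~C_{0}-\mathrm{aff}~C_{0}$ along $V$ together with $\mathrm{lin}~C_{0}=\{0\}$ to produce a supplement $U$ of $\mathrm{lin}~C$ with $C\cap U=C_{0}$. The only (harmless) deviations are that you establish the sharper identity $\mathrm{lin}~C=L$ where the paper settles for $L\subseteq \mathrm{lin}~C\subseteq V$, and that you construct $V$ and $U$ by explicit direct-sum splittings where the paper takes $V=\left( \mathrm{aff}~C_{0}-\mathrm{aff}~C_{0}\right) ^{\perp }$ and extends abstractly.
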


\begin{corollary}
In particular, $C$ is a generalized Minkowski set if and only if there exist
a Minkowski set $C_{0}\subseteq \mathbb{R}^{n}$ and a linear subspace $%
L\subseteq \left( \mathrm{aff~}C_{0}-\mathrm{aff~}C_{0}\right) ^{\bot }$
such that 
\begin{equation*}
C=C_{0}+L.
\end{equation*}
\end{corollary}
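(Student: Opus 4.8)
The statement to prove is the Corollary: $C$ is a generalized Minkowski set if and only if $C = C_0 + L$ for some Minkowski set $C_0$ and some linear subspace $L \subseteq (\mathrm{aff}~C_0 - \mathrm{aff}~C_0)^{\bot}$. The natural plan is to obtain this as a straightforward specialization of the immediately preceding Theorem~\ref{char gM}, which already provides the decomposition $C = C_0 + L$ with $L \subseteq V$, where $V$ is \emph{some} supplementary subspace to $\mathrm{aff}~C_0 - \mathrm{aff}~C_0$. Since the present statement asks for the \emph{particular} choice $V = (\mathrm{aff}~C_0 - \mathrm{aff}~C_0)^{\bot}$, the whole task reduces to showing that this orthogonal complement is a legitimate instance of the supplementary subspace appearing in Theorem~\ref{char gM}.

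First I would record the elementary fact that, writing $W := \mathrm{aff}~C_0 - \mathrm{aff}~C_0$ for the direction subspace of the affine hull of $C_0$, the orthogonal complement $W^{\bot}$ is always a supplementary subspace to $W$, i.e. $W \oplus W^{\bot} = \mathbb{R}^n$. This is immediate from the standard orthogonal decomposition of Euclidean space. Consequently $V := W^{\bot}$ is an admissible choice in Theorem~\ref{char gM}.

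Next, for the forward direction, I would simply invoke Theorem~\ref{char gM} to produce a Minkowski set $C_0$, a supplementary subspace $V$ to $W$, and a subspace $L \subseteq V$ with $C = C_0 + L$. The only subtlety is that Theorem~\ref{char gM} guarantees the existence of \emph{some} such $V$, not necessarily $W^{\bot}$; however, the proof of Theorem~\ref{char gM} should construct $C_0$ (via the orthogonal slice $C \cap (\mathrm{lin}~C)^{\bot}$ supplied by Theorem~\ref{th24.02.15.1}) in a way that already places $L$ inside $W^{\bot}$, or the choice of $V$ is free. I would therefore re-run the decomposition with $C_0 := C \cap (\mathrm{lin}~C)^{\bot}$, whose affine hull direction $W$ lies in $(\mathrm{lin}~C)^{\bot}$, and take $L := \mathrm{lin}~C$; then $\mathrm{lin}~C \subseteq W^{\bot}$ follows because $\mathrm{lin}~C$ is orthogonal to the slice's affine directions. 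The converse direction is the easy inclusion: if $C = C_0 + L$ with $C_0$ Minkowski and $L \subseteq W^{\bot}$, then since $W^{\bot}$ is supplementary to $W$, Theorem~\ref{char gM} applies verbatim with $V := W^{\bot}$, yielding that $C$ is a generalized Minkowski set.

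The main obstacle, such as it is, will be verifying that the subspace $L$ produced by (or compatible with) Theorem~\ref{char gM} can genuinely be taken inside $W^{\bot}$ rather than an arbitrary complement $V$. I expect this to be handled by the canonical construction $C_0 = C \cap (\mathrm{lin}~C)^{\bot}$ and $L = \mathrm{lin}~C$: here $W = \mathrm{aff}~C_0 - \mathrm{aff}~C_0 \subseteq (\mathrm{lin}~C)^{\bot}$ automatically, so $\mathrm{lin}~C \subseteq W^{\bot}$. Once this inclusion is pinned down, both implications collapse into the single observation that $W^{\bot}$ is a valid supplementary subspace, and the corollary follows with essentially no further computation.
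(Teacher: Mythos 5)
Your proposal is correct and takes essentially the same route as the paper: the paper's proof (which doubles as the proof of Theorem~\ref{char gM}) establishes the forward direction with exactly your canonical choices $C_{0}:=C\cap (\mathrm{lin}~C)^{\perp }$ and $L:=\mathrm{lin}~C$, using Theorem~\ref{th24.02.15.1} together with the inclusion $\mathrm{aff~}C_{0}-\mathrm{aff~}C_{0}\subseteq (\mathrm{lin}~C)^{\perp }$ to conclude $\mathrm{lin}~C\subseteq \left( \mathrm{aff~}C_{0}-\mathrm{aff~}C_{0}\right) ^{\bot }$, and it proves the converse for an arbitrary supplementary subspace $V$, of which your choice $V:=\left( \mathrm{aff~}C_{0}-\mathrm{aff~}C_{0}\right) ^{\bot }$ is the special case. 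Your key observation --- that the forward implication cannot be black-boxed from the bare statement of Theorem~\ref{char gM}, since the supplementary subspace there is only existentially quantified, so the canonical slice construction must be re-run --- is precisely the point the paper's own proof addresses.
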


\begin{proof}
If $C$ is a generalized Minkwoski set, then (\ref{Mink + subsp}) holds with%
\begin{equation*}
C_{0}:=C\cap (\mathrm{lin}~C)^{\perp },\text{ }V=\left( \mathrm{aff~}C_{0}-%
\mathrm{aff~}C_{0}\right) ^{\bot }\text{ and }L:=\mathrm{lin}~C.
\end{equation*}%
Indeed, by Theorem \ref{th24.02.15.1}, the set $C\cap (\mathrm{lin}%
~C)^{\perp }$ is generalized Minkowski and, on the other hand, we have%
\begin{equation*}
\mathrm{aff}\left( C\cap (\mathrm{lin}~C)^{\perp }\right) -\mathrm{aff}%
\left( C\cap (\mathrm{lin}~C)^{\perp }\right) \subseteq (\mathrm{lin}%
~C)^{\perp }-(\mathrm{lin}~C)^{\perp }=(\mathrm{lin}~C)^{\perp },
\end{equation*}%
and hence $\mathrm{lin}~C=(\mathrm{lin}~C)^{\perp \bot }\subseteq \left( 
\mathrm{aff~}C_{0}-\mathrm{aff~}C_{0}\right) ^{\bot }.$

Conversely, assume that (\ref{Mink + subsp}) holds for some Minkowski set $%
C_{0}\subseteq \mathbb{R}^{n},$ some supplementary subspace $V\subseteq 
\mathbb{R}^{n}$ to $\mathrm{aff~}C_{0}-\mathrm{aff~}C_{0}$ and some subspace 
$L\subseteq V.$ By applying a translation if necessary, we assume, without
loss of generality, that $0\in C_{0}.$ Then$,$ $\mathrm{aff~}C_{0}-\mathrm{%
aff~}C_{0}=\mathrm{aff~}C_{0}$. We will now prove that%
\begin{equation}
L\subseteq \mathrm{lin}~C\subseteq V.  \label{L = lin}
\end{equation}%
The first inclusion being obvious, we will only prove the second one. To
this aim, let $d\in \mathrm{lin}~C.$ Denoting by $\pi $ the projection
mapping onto $\mathrm{aff~}C_{0}$ corresponding to the direct sum $V\oplus 
\mathrm{aff~}C_{0}=\mathbb{R}^{n},$ we have%
\begin{equation*}
\pi \left( d\right) \in \pi \left( \mathrm{lin}~C\right) \subseteq \mathrm{%
lin}~\pi \left( C\right) =\mathrm{lin}~C_{0}=\left\{ 0\right\} ;
\end{equation*}%
we have here used the equality $\pi \left( C\right) =C_{0}$. Thus, $\pi
\left( d\right) =0,$ that is, $d\in V,$ which completes the proof of (\ref{L
= lin}). We deduce that $\mathrm{lin}~C\cap \mathrm{aff~}C_{0}\subseteq
V\cap \mathrm{aff~}C_{0}=\left\{ 0\right\} ;$ consequently, $\mathrm{aff~}%
C_{0}\subseteq U$ for some supplementary subspace $U$ to $\mathrm{lin}~C.$
We have $C\cap U=\left( C_{0}+L\right) \cap U=C_{0},$ the latter equality
being an easy consequence of the inclusions $C_{0}\subseteq \mathrm{aff~}%
C_{0}\subseteq U$ and $U\cap L\subseteq U\cap \mathrm{aff~}C_{0}\subseteq
U=\left\{ 0\right\} .$ To complete the proof, it suffices to apply Theorem %
\ref{th24.02.15.1}.
\end{proof}

\begin{remark}
The condition that $L\ $is contained in a supplementary space to $\mathrm{%
aff~}C_{0}-\mathrm{aff~}C_{0}$ is essential for the validity of the "if"
statement of Theorem \ref{char gM}, as can be seen by considering, e.g., the
case when $C_{0}$ is the convex hull of a parabola in $\mathbb{R}^{2}$ and $%
L $ is the line trough the origin orthogonal to the axis of the parabola.
\end{remark}

\begin{definition}
A l.s.c. proper convex function $f:\mathbb{R}^{n}\rightarrow \mathbb{R\cup }%
\left\{ +\infty \right\} $ which has a generalized Minkowski epigraph is
called a \emph{generalized Minkowski set.}
\end{definition}

Let $V$ be a subspace of $\mathbb{R}^{n}.$ Recall that a set $S\subseteq 
\mathbb{R}^{n}$ is said to be $V$-invariant if $S+V=S.$

\begin{remark}
\label{inv}Let $U$ be a supplementary subspace to $V.$ One can easily prove
the equivalence%
\begin{equation*}
S\text{ is }V\text{-invariant }\Leftrightarrow \text{ }S\cap U+V=S.
\end{equation*}%
Indeed, if $S$ is $V$-invariant, then $S\cap U+V\subseteq S+V=S,$ and, for
the opposite inclusion, consider an element $s\in S,$ use the direct sum
decomposition%
\begin{equation*}
U\oplus V=\mathbb{R}^{n}
\end{equation*}%
to write%
\begin{equation}
s=s^{U}+s^{V},\text{\qquad with }s^{U}\in U\text{ and }s^{V}\in V,
\label{dir sum}
\end{equation}%
and observe that%
\begin{equation*}
s^{U}=s-s^{V}\in S-V=S+V=S.
\end{equation*}%
Conversely, if $S\cap U+V=S$ and $s\in S$ is decomposed as in (\ref{dir sum}%
), by the uniqueness of this decomposition one has $s^{U}\in S\cap U,$ and
hence%
\begin{equation*}
s+V=s^{U}+s^{V}+V=s^{U}+V\subseteq S\cap U+V=S,
\end{equation*}%
which proves that $S+V=S,$ as the inclusion $\supseteq $ is obvious.

In particular, one has%
\begin{equation*}
S\text{ is }V\text{-invariant }\Leftrightarrow \text{ }S\cap V^{\bot }+V=S.
\end{equation*}
\end{remark}

\begin{lemma}
\label{conv inv}Let $C\subseteq \mathbb{R}^{n}$ be a nonempty closed convex
set and $U\subseteq \mathbb{R}^{n}$ be a supplementary subspace to $\mathrm{%
lin}~C$. If $S\subseteq C$ is a $\left( \mathrm{lin}~C\right) $-invariant
set such that $\mathrm{conv}~S=C,$ then%
\begin{equation}
\mathrm{conv}\left( S\cap U\right) =C\cap U.  \label{inv gen}
\end{equation}

In particular,%
\begin{equation*}
\mathrm{conv}\left( S\cap (\mathrm{lin}~C)^{\perp }\right) =C\cap (\mathrm{%
lin}~C)^{\perp }.
\end{equation*}
\end{lemma}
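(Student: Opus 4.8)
The plan is to prove the two inclusions in \eqref{inv gen} separately, the easy one being $\mathrm{conv}\left( S\cap U\right) \subseteq C\cap U$ and the substantial one being the reverse. Throughout I would write $V:=\mathrm{lin}~C$, so that $U\oplus V=\mathbb{R}^{n}$ and $S+V=S$ by the $\left( \mathrm{lin}~C\right) $-invariance hypothesis. For the inclusion $\subseteq $, note that $S\cap U\subseteq S\subseteq C$ and $S\cap U\subseteq U$, so $S\cap U\subseteq C\cap U$; since $C\cap U$ is convex (being an intersection of convex sets), passing to convex hulls immediately gives $\mathrm{conv}\left( S\cap U\right) \subseteq C\cap U$.

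For the reverse inclusion $\supseteq $, I would take $x\in C\cap U$ and, using the hypothesis $C=\mathrm{conv}~S$, write $x=\sum_{i=1}^{m}\lambda _{i}s_{i}$ as a convex combination of points $s_{i}\in S$, with $\lambda _{i}\geq 0$ and $\sum_{i=1}^{m}\lambda _{i}=1$. Decomposing each $s_{i}=s_{i}^{U}+s_{i}^{V}$ according to $U\oplus V=\mathbb{R}^{n}$, with $s_{i}^{U}\in U$ and $s_{i}^{V}\in V$, the crucial step is the observation that $s_{i}^{U}\in S\cap U$. This is exactly the content of Remark \ref{inv}: since $S$ is $V$-invariant and $s_{i}^{U}$ is the $U$-component of the point $s_{i}\in S$, one has $s_{i}^{U}\in S\cap U$ (equivalently, $s_{i}^{U}=s_{i}-s_{i}^{V}\in S+V=S$, using that $V$ is a subspace).

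Having this, I would then decompose $x$ itself as $x=\sum_{i=1}^{m}\lambda _{i}s_{i}^{U}+\sum_{i=1}^{m}\lambda _{i}s_{i}^{V}$, where $\sum_{i=1}^{m}\lambda _{i}s_{i}^{U}\in U$ and $\sum_{i=1}^{m}\lambda _{i}s_{i}^{V}\in V$, both $U$ and $V$ being subspaces. Since $x\in U$, the uniqueness of the direct-sum decomposition forces $\sum_{i=1}^{m}\lambda _{i}s_{i}^{V}=0$ and $x=\sum_{i=1}^{m}\lambda _{i}s_{i}^{U}$; as each $s_{i}^{U}\in S\cap U$ and the $\lambda _{i}$ are convex coefficients, this exhibits $x$ as an element of $\mathrm{conv}\left( S\cap U\right) $, which completes the proof of \eqref{inv gen}. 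The "in particular" assertion then follows by applying the general statement to the supplementary subspace $U:=\left( \mathrm{lin}~C\right) ^{\perp }$. I do not expect a genuine obstacle here: the entire argument rests on the single observation that the $\left( \mathrm{lin}~C\right) $-invariance of $S$ forces the $U$-components of its points back into $S$ (which is precisely Remark \ref{inv}), after which uniqueness of the $U\oplus V$ decomposition annihilates the $V$-part of the convex combination.
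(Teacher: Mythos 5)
Your proof is correct, but it takes a more elementary, pointwise route than the paper does. The paper's proof is a single chain of set identities: by Remark \ref{inv} (applied to both $S$ and $C$, the latter being trivially $\left( \mathrm{lin}~C\right) $-invariant), one has $C\cap U+\mathrm{lin}~C=C=\mathrm{conv}~S=\mathrm{conv}\left( S\cap U+\mathrm{lin}~C\right) =\mathrm{conv}\left( S\cap U\right) +\mathrm{lin}~C$, where the last step uses the additivity of the convex hull over Minkowski sums; the conclusion then follows by cancelling the common summand $\mathrm{lin}~C$, a step that is legitimate (though left implicit behind the word ``hence'') precisely because both $C\cap U$ and $\mathrm{conv}\left( S\cap U\right) $ lie in $U$. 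You avoid both of these set-level tools: you prove the easy inclusion by convexity of $C\cap U$, and for the hard one you expand $x\in C\cap U$ as a convex combination of points of $S$, push the $U$-components of those points back into $S\cap U$ via the key observation of Remark \ref{inv}, and annihilate the $\mathrm{lin}~C$-components by uniqueness of the direct-sum decomposition --- in effect performing the paper's cancellation pointwise. Both arguments rest on the same decomposition $U\oplus \mathrm{lin}~C=\mathbb{R}^{n}$ and the same key fact from Remark \ref{inv}, so the difference is one of bookkeeping: yours is longer but fully explicit and needs neither the identity $\mathrm{conv}\left( A+B\right) =\mathrm{conv}~A+\mathrm{conv}~B$ nor the set-level cancellation, while the paper's is more compact at the price of leaving that final cancellation to the reader.
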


\begin{proof}
By Remark \ref{inv}, we have%
\begin{eqnarray*}
C\cap U+\mathrm{lin}~C &=&C=\mathrm{conv}~S=\mathrm{conv}\left( S\cap U+%
\mathrm{lin}~C\right) \\
&=&\mathrm{conv}\left( S\cap U\right) +\mathrm{conv}\left( \mathrm{lin}%
~C\right) \\
&=&\mathrm{conv}\left( S\cap U\right) +\mathrm{lin}~C,
\end{eqnarray*}%
and hence we obtain (\ref{inv gen}).
\end{proof}

\begin{lemma}
\label{sect Mink}Let $C\subseteq \mathbb{R}^{n}$ be a nonempty closed convex
set and $U\subseteq \mathbb{R}^{n}$ be a supplementary subspace to $\mathrm{%
lin}~C$. If $C\cap U$ is a Minkowski set, then%
\begin{equation*}
\mathrm{conv~}MF\left( C\right) =C.
\end{equation*}
\end{lemma}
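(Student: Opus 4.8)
The plan is to establish this directly from the machinery already developed, since the statement is essentially a packaging of Theorem~\ref{th24.02.15.1} together with the defining equation $C = \mathrm{conv}~MF(C)$. The key observation is that the hypothesis ``$C \cap U$ is a Minkowski set'' is exactly the right-hand condition in Theorem~\ref{th24.02.15.1}, whose equivalence with ``$C$ is a generalized Minkowski set'' has already been proved.

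First I would invoke Theorem~\ref{th24.02.15.1}: given a nonempty closed convex set $C \subseteq \mathbb{R}^{n}$ and a supplementary subspace $U$ to $\mathrm{lin}~C$, the assumption that $C \cap U$ is a Minkowski set yields at once that $C$ is a generalized Minkowski set. Then I would unfold the definition of a generalized Minkowski set, which asserts precisely that
\begin{equation*}
C = \mathrm{conv}~MF\left( C\right) ,
\end{equation*}
the desired conclusion.

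\begin{proof}
Since $C \cap U$ is a Minkowski set, Theorem~\ref{th24.02.15.1} guarantees that $C$ is a generalized Minkowski set, i.e.\ that $C = \mathrm{conv}~MF\left( C\right)$, which is the claimed equality.
\end{proof}

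I do not expect any genuine obstacle here: the content of the lemma is already contained in the forward direction of Theorem~\ref{th24.02.15.1}, and the only thing to check is that the definition of a generalized Minkowski set is literally the equation $C = \mathrm{conv}~MF(C)$. The one point worth a moment's care is whether the independence of the conclusion from the particular choice of supplementary subspace $U$ needs comment; but this is already built into Theorem~\ref{th24.02.15.1}, whose statement holds for \emph{any} such $U$, so no additional argument is required. The lemma is best viewed as an explicit restatement isolated for convenient reference in the sequel, rather than as a result demanding independent proof.
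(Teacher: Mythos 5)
Your proof is correct, and the citation is not circular: Theorem \ref{th24.02.15.1} precedes Lemma \ref{sect Mink} in the paper, and its proof nowhere relies on the lemma, so deducing the lemma from the forward implication of that theorem plus the definition of a generalized Minkowski set is sound. The paper, however, proves the lemma by a short self-contained computation rather than by citation: by Corollary \ref{LDF} one has $MF\left( C\right) =\mathrm{ext}\left( C\cap U\right) +\mathrm{lin}~C$, whence
\begin{equation*}
\mathrm{conv}~MF\left( C\right) =\mathrm{conv}\left( \mathrm{ext}\left(
C\cap U\right) +\mathrm{lin}~C\right) =\mathrm{conv}~\mathrm{ext}\left(
C\cap U\right) +\mathrm{lin}~C=C\cap U+\mathrm{lin}~C=C,
\end{equation*}
using additivity of $\mathrm{conv}$ over Minkowski sums, the Minkowski property of $C\cap U$, and the decomposition $C=C\cap U+\mathrm{lin}~C$. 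This is exactly the chain of equalities (run in the opposite order) by which the paper established the forward direction of Theorem \ref{th24.02.15.1} in the first place, so once unfolded the two arguments are mathematically identical; the difference is purely organizational. Your version makes explicit that the lemma is a literal restatement of one direction of the theorem, which is the cleaner logical picture; the paper's version keeps the lemma self-contained at the level of Corollary \ref{LDF}, which lets it be invoked later (e.g., twice in the proof of Proposition \ref{prop22.12.2022.1}) as a concrete equality about $MF\left( C\right) $ without routing through the generalized-Minkowski terminology. Either way, no step is missing from your argument.
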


\begin{proof}
By Corollary \ref{LDF}, we have%
\begin{eqnarray*}
\mathrm{conv}~MF\left( C\right) &=&\mathrm{conv}\left( \mathrm{ext}\left(
C\cap U\right) +\mathrm{lin}~C\right) \\
&=&\mathrm{conv}\left( \mathrm{ext}\left( C\cap U\right) \right) +\mathrm{%
conv}~\mathrm{lin}~C \\
&=&C\cap U+\mathrm{lin}~C=C.
\end{eqnarray*}
\end{proof}

\begin{proposition}
For a nonempty closed convex set $C\subseteq \mathbb{R}^{n},$ the following
statements are equivalent:

\begin{enumerate}
\item $C$ is generalized Minkowski.\label{st23.01.15.1}

\item There exists a smallest $\left( \mathrm{lin}~C\right) $-invariant set $%
S\subseteq C$ such that $\mathrm{conv}~S=C$.\label{st23.01.15.2}

\item There exists a minimal $\left( \mathrm{lin}~C\right) $-invariant set $%
S\subseteq C$ such that $\mathrm{conv}~S=C.$\label{st23.01.15.3}
\end{enumerate}

In 2 and 3, one has $S=MF\left( C\right) $.

\label{prop22.12.2022.1}
\end{proposition}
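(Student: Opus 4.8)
The plan is to prove the cycle of implications together with the identification $S=MF(C)$. Throughout I fix a supplementary subspace $U$ to $\mathrm{lin}~C$ and work with the slice $D:=C\cap U$, which is a line-free closed convex set (since $\mathrm{lin}(C\cap U)\subseteq \mathrm{lin}~C\cap U=\{0\}$). By Remark \ref{inv} the $(\mathrm{lin}~C)$-invariant subsets $S\subseteq C$ are exactly the sets $S=(S\cap U)+\mathrm{lin}~C$, so they correspond bijectively to subsets $T=S\cap U\subseteq D$; moreover $\mathrm{conv}~S=C$ is equivalent to $\mathrm{conv}~T=D$ (one direction is Lemma \ref{conv inv}, the other follows by adding $\mathrm{lin}~C$ and using that $\mathrm{lin}~C$ is convex). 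This correspondence preserves inclusion, so ``smallest''/``minimal'' for $S$ among invariant generating subsets of $C$ means exactly ``smallest''/``minimal'' for $T$ among generating subsets of $D$. By Corollary \ref{LDF}, $MF(C)=\mathrm{ext}(D)+\mathrm{lin}~C$ is itself $(\mathrm{lin}~C)$-invariant and corresponds to $T=\mathrm{ext}(D)$. I will repeatedly use the elementary fact that if $\mathrm{conv}~B=D$ then $\mathrm{ext}(D)\subseteq B$ (an extreme point, being a convex combination of points of $B$, must equal one of them); consequently $\mathrm{ext}(D)\subseteq T$ for every generating $T$, i.e. $MF(C)\subseteq S$ for every invariant generating $S$.

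For [\ref{st23.01.15.1} $\Rightarrow$ \ref{st23.01.15.2}], assume $C=\mathrm{conv}~MF(C)$. Then $MF(C)$ is itself an invariant generating set, and by the fact above it is contained in every other one; hence it is the smallest such set, and $S=MF(C)$. The implication [\ref{st23.01.15.2} $\Rightarrow$ \ref{st23.01.15.3}] is immediate, a smallest element being a fortiori minimal.

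The substance is [\ref{st23.01.15.3} $\Rightarrow$ \ref{st23.01.15.1}], which reduces to the claim that a minimal generating $T\subseteq D$ must equal $\mathrm{ext}(D)$; for then $D$ is a Minkowski set and Theorem \ref{th24.02.15.1} yields that $C$ is generalized Minkowski, while $T=\mathrm{ext}(D)$ gives $S=MF(C)$. Since $\mathrm{ext}(D)\subseteq T$ always holds, it suffices to show every $t_0\in T$ is extreme in $D$. The key step, which I expect to be the main obstacle, is proving that a non-extreme $t_0$ is redundant, i.e. $t_0\in\mathrm{conv}(T\setminus\{t_0\})$, contradicting minimality. To this end write $t_0=\tfrac12(a+b)$ with $a\neq b$ in $D=\mathrm{conv}~T$, expand $a$ and $b$ as finite convex combinations of points of $T$, and combine them into a single convex combination $t_0=\sum_k\gamma_k u_k$ with $u_k\in T$. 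Let $\gamma_0$ be the total weight carried by $t_0$. Since $a\neq b$, at least one of them differs from $t_0$ and hence carries weight strictly less than $1$ on $t_0$ in any representation, so $\gamma_0<1$; solving $(1-\gamma_0)t_0=\sum_{u_k\neq t_0}\gamma_k u_k$ then exhibits $t_0$ as a convex combination of $T\setminus\{t_0\}$, whence $\mathrm{conv}(T\setminus\{t_0\})=D$. Translating back, deleting the coset $t_0+\mathrm{lin}~C$ from $S$ produces a strictly smaller invariant generating set, contradicting minimality of $S$ unless $t_0\in\mathrm{ext}(D)$. Therefore $T=\mathrm{ext}(D)$, $S=MF(C)$, and the equivalence is complete.
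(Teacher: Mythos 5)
Your proof is correct, and its skeleton coincides with the paper's: both arguments pass to a slice $D=C\cap U$ and transport everything through the inclusion-preserving correspondence $S\leftrightarrow T=S\cap U$ between $(\mathrm{lin}~C)$-invariant subsets of $C$ and subsets of $D$ (Remark \ref{inv}, Lemma \ref{conv inv}, Corollary \ref{LDF}), and both close the loop with Theorem \ref{th24.02.15.1}. The genuine difference is how the slice-level statement is settled. The paper does not prove it at all: it invokes \cite[Proposition 3.1]{MaLe-Pi}, which asserts precisely that a closed convex set is Minkowski if and only if it admits a smallest (equivalently, a minimal) subset whose convex hull is the whole set, that subset then being the set of extreme points. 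You prove this equivalence from scratch: the inclusion $\mathrm{ext}(D)\subseteq T$ for every generating $T\subseteq D$, and the redundancy argument showing that a non-extreme $t_{0}$ in a generating set satisfies $t_{0}\in \mathrm{conv}\left( T\setminus \{t_{0}\}\right) $; your bookkeeping there is sound, since $a\neq b$ together with $t_{0}=\tfrac{1}{2}(a+b)$ forces $a\neq t_{0}$, so the weight of $t_{0}$ in the representation of $a$ is strictly less than $1$ and hence $\gamma _{0}<1$. So your argument is self-contained where the paper's is not---in effect you re-derive the cited proposition of Mart\'{\i}nez-Legaz and Pintea---at the cost of repeating elementary convexity, whereas the paper's version leans on the earlier result and thereby exhibits the present proposition as exactly the lineality-invariant extension of the earlier Minkowski-set characterization. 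A minor further contrast in your favor: the explicit bijection for an arbitrary supplementary $U$ makes the transfer of ``smallest''/``minimal'' across the correspondence transparent, while the paper carries out that transfer by hand for $U=(\mathrm{lin}~C)^{\perp }$ inside the implication $3\Rightarrow 1$ (where, incidentally, requiring the competitor $S_{1}$ to be $(\mathrm{lin}~C)$-invariant is superfluous, since what must be fed into \cite[Proposition 3.1]{MaLe-Pi} is minimality among all generating subsets of the slice).
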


\begin{proof}
1 $\Rightarrow $ 2. Since the closed convex set $C\cap (\mathrm{lin}%
~C)^{\perp }$ is Minkowski, according to Theorem \ref{th24.02.15.1}, it
follows that $\mathrm{ext}\left( C\cap (\mathrm{lin}~C)^{\perp }\right) $
is, according to \cite[Proposition 3.1]{MaLe-Pi}, the smallest set whose
convex hull equals $C\cap (\mathrm{lin}~C)^{\perp }$. By Corollary \ref{LDF}%
, the set $MF\left( C\right) $ is $\left( \mathrm{lin}~C\right) $-invariant.
Let $S\subseteq C$ be a $\left( \mathrm{lin}~C\right) $-invariant set such
that $\mathrm{conv}~S=C$. From Lemma \ref{conv inv}, it follows that%
\begin{equation*}
\mathrm{ext}\left( C\cap (\mathrm{lin}~C)^{\perp }\right) \subseteq S\cap (%
\mathrm{lin}~C)^{\perp },
\end{equation*}
which, by Remark \ref{inv}, implies that%
\begin{equation*}
MF\left( C\right) =\mathrm{ext}\left( C\cap (\mathrm{lin}~C)^{\perp }\right)
+\mathrm{lin}~C\subseteq S\cap (\mathrm{lin}~C)^{\perp }+\mathrm{lin}~C=S.
\end{equation*}%
Since, by Lemma \ref{sect Mink}, we have $\mathrm{conv~}MF\left( C\right)
=C, $ in view of (\ref{eq10.07.17.1}) the set $MF\left( C\right) $ is the
smallest $\left( \mathrm{lin}~C\right) $-invariant subset of $C$ whose
convex hull equals $C$.\newline
2 $\Rightarrow $ 3. Obvious.\newline
3 $\Rightarrow $ 1. By Lemma \ref{conv inv}, we have $\mathrm{conv}(S\cap (%
\mathrm{lin}~C)^{\perp })=C\cap (\mathrm{lin}~C)^{\perp }.$ We will actually
prove that $S\cap (\mathrm{lin}~C)^{\perp }$ is minimal among the $\left( 
\mathrm{lin}~C\right) $-invariant sets whose convex hulls equal $C\cap (%
\mathrm{lin}~C)^{\perp }.$ Indeed, if $\mathrm{conv}~S_{1}=C\cap (\mathrm{lin%
}~C)^{\perp }$ for some $\left( \mathrm{lin}~C\right) $-invariant set $%
S_{1}\subseteq S\cap (\mathrm{lin}~C)^{\perp }$, then 
\begin{eqnarray*}
\mathrm{conv}(S_{1}+\mathrm{lin}~C) &=&\mathrm{conv}~S_{1}+\mathrm{conv}~%
\mathrm{lin}~C=C\cap (\mathrm{lin}~C)^{\perp }+\mathrm{conv}~\mathrm{lin}~C
\\
&=&C\cap (\mathrm{lin}~C)^{\perp }+\mathrm{lin}~C=C
\end{eqnarray*}%
and $S_{1}+\mathrm{lin}~C\subseteq S\cap (\mathrm{lin}~C)^{\perp }+\mathrm{%
lin}~C=S$. But since $S_{1}+\mathrm{lin}~C$ is $\left( \mathrm{lin}~C\right) 
$-invariant and $S$ is a minimal $\left( \mathrm{lin}~C\right) $-invariant
set with $\mathrm{conv}~S=C$, it follows that $S_{1}=S_{1}+\mathrm{lin}~C=S$%
. Thus 
\begin{equation*}
S\cap (\mathrm{lin}~C)^{\perp }=S_{1}\cap (\mathrm{lin}~C)^{\perp }=S_{1},
\end{equation*}%
which proves the minimality of $S\cap (\mathrm{lin}~C)^{\perp }$. According
to \cite[Proposition 3.1]{MaLe-Pi}, it follows that the set $C\cap (\mathrm{%
lin}~C)^{\perp }$ is Minkowski; therefore, by Lemma \ref{sect Mink}, we have 
$\mathrm{conv~}MF\left( C\right) =C$.
\end{proof}

\section{The role of the Pareto like sets in this setting}

In \cite{MaLe-Pi}, we paid some special attention to the closed convex sets $%
\emptyset \neq C\subseteq \mathbb{R}^{n}$ for which $M(C)=\mathrm{rbd}(C)$,
where $M(C)$ stands for the \emph{Pareto like set} of $C:$%
\begin{equation*}
\begin{array}{ccc}
M(C) & := & \{x\in C:(x-0^{+}C)\cap C\subseteq x+0^{+}C\} \\ 
& = & \{x\in C:(x-0^{+}C)\cap C=x+\mathrm{lin}~C\}.%
\end{array}%
\end{equation*}

\begin{remark}
\label{rem24.12.2022.2} $($\cite[Proposition 4.4]{MaLe-Pi}$)$ If $C\subseteq 
\mathbb{R}^{n}$ is a closed convex set, then $M(C)$ is $(\mathrm{lin}~C)$%
-invariant.
\end{remark}

\begin{theorem}
\label{prop221215}Let $C\subseteq \mathbb{R}^{n}$ be a nonempty closed
convex set with the property that $\dim (C\cap (\mathrm{lin}~C)^{\perp
})\geq 2$. If $C$ is not an affine variety and $M(C)=\mathrm{rbd}~C$, then $%
C $ is a generalized Minkowski set.
\end{theorem}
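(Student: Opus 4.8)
The plan is to reduce to the orthogonal slice and then settle a purely line-free statement by induction on dimension. Put $L:=\mathrm{lin}~C$, $W:=L^{\perp}$ and $D:=C\cap W$. By the ``in particular'' part of Theorem~\ref{th24.02.15.1}, $C$ is a generalized Minkowski set if and only if $D$ is a Minkowski set, so it suffices to prove $D=\mathrm{conv}(\mathrm{ext}~D)$. Here $D$ is closed, convex and line-free, since $\mathrm{lin}~D=0^{+}D\cap(-0^{+}D)\subseteq L\cap W=\{0\}$; moreover the hypotheses force $\dim D\geq 2$ and guarantee that $D$ is neither an affine variety nor a half-line.

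Next I would transport the Pareto hypothesis from $C$ to $D$. From $C=D+L$ one gets the two slice identities $\mathrm{rbd}~C\cap W=\mathrm{rbd}~D$ and $M(C)\cap W=M(D)$. The first is immediate from $\mathrm{ri}~C=\mathrm{ri}~D+L$ together with $D\subseteq W$ and $L\cap W=\{0\}$. For the second I would use the recession decomposition $0^{+}C=0^{+}D\oplus L$ (with $0^{+}D\subseteq W$), which yields, for every $x\in D$, the equality $(x-0^{+}C)\cap C=\big[(x-0^{+}D)\cap D\big]+L$; intersecting with $W$ and recalling $\mathrm{lin}~D=\{0\}$ turns the defining condition of $M(C)$ at $x$ into the defining condition of $M(D)$ at $x$. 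Intersecting the hypothesis $M(C)=\mathrm{rbd}~C$ with $W$ and invoking these two identities gives $M(D)=\mathrm{rbd}~D$ (consistently with the $(\mathrm{lin}~C)$-invariance of $M(C)$ recorded in Remark~\ref{rem24.12.2022.2}).

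The heart of the argument is the implication: \emph{a line-free closed convex set none of whose faces (proper or improper) is a half-line is a Minkowski set.} First I would extract the no-half-line condition from $M(D)=\mathrm{rbd}~D$. If some proper face of $D$ were a half-line $x_{0}+\mathbb{R}_{+}d$, then $d\in 0^{+}D$ and each point $x_{0}+td$ with $t>0$ would lie in $\mathrm{rbd}~D$ (a proper face is contained in the relative boundary) yet not in $M(D)$, since $x_{0}=(x_{0}+td)-td$ witnesses its non-minimality; this contradicts $M(D)=\mathrm{rbd}~D$. The improper face $D$ itself is not a half-line because $\dim D\geq 2$. I would then prove the displayed implication by induction on $\dim D$. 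The property ``line-free with no face a half-line'' passes to every face $F$: faces of $F$ are faces of $D$ by transitivity, and $\mathrm{lin}~F=\mathrm{lin}~D=\{0\}$ by Proposition~\ref{rem19.08.17}. For the inductive step, as $D$ is line-free with $\dim D\geq 2$, it is neither an affine variety nor a half-flat, so \cite[Theorem~2.6.12]{Web} gives $D=\mathrm{conv}(\mathrm{rbd}~D)$; every $x\in\mathrm{rbd}~D$ lies in a proper face $F_{x}$ with $\dim F_{x}<\dim D$ by \cite[Corollary~2.6.11]{Web}, the induction hypothesis gives $F_{x}=\mathrm{conv}(\mathrm{ext}~F_{x})$, and since extreme points of a face are extreme points of $D$ we get $x\in\mathrm{conv}(\mathrm{ext}~D)$. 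Hence $\mathrm{rbd}~D\subseteq\mathrm{conv}(\mathrm{ext}~D)$ and therefore $D=\mathrm{conv}(\mathrm{rbd}~D)\subseteq\mathrm{conv}(\mathrm{ext}~D)\subseteq D$. The base cases $\dim D\in\{0,1\}$ are immediate: a point, or, once half-lines are excluded, a segment.

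I expect the inductive core, not the reduction, to be the main obstacle. Two points deserve care: first, that the condition actually inherited and exploited is ``no face is a half-line'', and that this is precisely what $M(D)=\mathrm{rbd}~D$ delivers — here $\dim D\geq 2$ is essential, since a half-line itself satisfies $M=\mathrm{rbd}$ yet fails to be Minkowski; second, the slice identity $M(C)\cap W=M(D)$, whose verification rests on the decomposition $0^{+}C=0^{+}D\oplus L$ and on projecting $(x-0^{+}C)\cap C$ onto $W$.
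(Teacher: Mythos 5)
Your proof is correct, and its skeleton --- pass to the orthogonal slice $D=C\cap(\mathrm{lin}~C)^{\perp}$, transfer the hypothesis to $M(D)=\mathrm{rbd}~D$, then lift ``$D$ is Minkowski'' back to ``$C$ is generalized Minkowski'' via Theorem \ref{th24.02.15.1} --- is exactly the paper's. The difference lies in how the two halves are discharged. For the transfer, the paper proves the relative-boundary identity $\mathrm{rbd}(D)+\mathrm{lin}~C=\mathrm{rbd}~C$ in-house (equation (\ref{rbd})) but cites \cite[Proposition 4.5]{MaLe-Pi} for the corresponding identity for $M$; you instead derive the $M$-identity from the decomposition $0^{+}C=0^{+}D\oplus\mathrm{lin}~C$, which is a clean and correct substitute. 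More substantially, for the line-free core the paper again leans on prior work, \cite[Proposition 4.8]{MaLe-Pi}, whereas you prove it from scratch: you extract from $M(D)=\mathrm{rbd}~D$ (together with $\dim D\geq 2$ and line-freeness) that no face of $D$ is a half-line, and then show by induction on dimension --- using \cite[Theorem 2.6.12]{Web}, \cite[Corollary 2.6.11]{Web} and Proposition \ref{rem19.08.17} --- that a line-free closed convex set with no half-line faces is the convex hull of its extreme points, in effect re-proving the relevant case of Klee's theorem by the very technique the paper itself uses in its corollary identifying minimal and lowest dimensional faces. What your route buys is self-containedness and a sharp isolation of why $\dim D\geq 2$ cannot be dropped (your half-line example); what the paper's route buys is brevity. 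The facts you use tacitly --- faces of closed convex sets are closed, a face of a face is a face, extreme points of a face are extreme points of the ambient set, proper faces lie in the relative boundary, and $\mathrm{ri}(A+B)=\mathrm{ri}~A+\mathrm{ri}~B$ --- are all standard, so none of them creates a genuine gap.
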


In order to prove Theorem \ref{prop221215}, we need the following lemmas. We
will denote by $\pi $ the projection mapping onto $(\mathrm{lin}~C)^{\perp
}. $

\begin{lemma}
Let $C\subseteq \mathbb{R}^{n}$ be a nonempty closed convex set and $%
U\subseteq \mathbb{R}^{n}$ be a supplementary subspace to $\mathrm{lin}~C$.
If $\pi _{U}$ denotes the projection mapping onto $U$ corresponding to the
direct sum decomposition $U\oplus \mathrm{lin}~C=\mathbb{R}^{n}$, then%
\begin{equation}
\pi _{U}\left( C\right) =C\cap U  \label{proj}
\end{equation}%
and%
\begin{equation}
C=\pi _{U}\left( C\right) +\mathrm{lin}~C.  \label{C from proj}
\end{equation}

In particular, if $\pi $ denotes the orthogonal projection mapping onto $(%
\mathrm{lin}~C)^{\perp },$ then%
\begin{equation*}
\pi \left( C\right) =C\cap (\mathrm{lin}~C)^{\perp }
\end{equation*}%
and%
\begin{equation*}
C=\pi \left( C\right) +\mathrm{lin}~C.
\end{equation*}
\end{lemma}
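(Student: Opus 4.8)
The plan is to reduce everything to the defining property of the lineality space, namely that $C$ is $(\mathrm{lin}~C)$-invariant. Since $\mathrm{lin}~C=0^{+}C\cap(-0^{+}C)\subseteq 0^{+}C$ and $\mathrm{lin}~C=-\mathrm{lin}~C$, both $d$ and $-d$ lie in $0^{+}C$ for every $d\in\mathrm{lin}~C$; hence $C+\mathrm{lin}~C\subseteq C$, and the reverse inclusion is trivial because $0\in\mathrm{lin}~C$. Thus $C+\mathrm{lin}~C=C$, i.e. $C$ is $(\mathrm{lin}~C)$-invariant. This single fact drives the whole argument.

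To prove \eqref{proj}, I would argue by double inclusion using the direct sum decomposition $U\oplus\mathrm{lin}~C=\mathbb{R}^{n}$. For $x\in C$, write $x=x_{U}+x_{l}$ with $x_{U}=\pi_{U}(x)\in U$ and $x_{l}\in\mathrm{lin}~C$; then $\pi_{U}(x)=x-x_{l}\in C+\mathrm{lin}~C=C$, so $\pi_{U}(x)\in C\cap U$, giving $\pi_{U}(C)\subseteq C\cap U$. For the reverse inclusion, any $y\in C\cap U$ satisfies $\pi_{U}(y)=y$ because $y\in U$, whence $y=\pi_{U}(y)\in\pi_{U}(C)$.

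For \eqref{C from proj}, having established $\pi_{U}(C)=C\cap U$, I would simply invoke Remark \ref{inv}: since $C$ is $(\mathrm{lin}~C)$-invariant and $U$ is supplementary to $\mathrm{lin}~C$, that remark yields $C\cap U+\mathrm{lin}~C=C$, which is exactly $\pi_{U}(C)+\mathrm{lin}~C=C$. Alternatively, one can argue directly: the decomposition $x=x_{U}+x_{l}$ above shows $C\subseteq(C\cap U)+\mathrm{lin}~C$, while the reverse inclusion follows from $(C\cap U)+\mathrm{lin}~C\subseteq C+\mathrm{lin}~C=C$.

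Finally, the ``in particular'' statements are the special case $U=(\mathrm{lin}~C)^{\perp}$, which is indeed a supplementary subspace to $\mathrm{lin}~C$ and for which $\pi_{U}$ is precisely the orthogonal projection $\pi$ onto $(\mathrm{lin}~C)^{\perp}$; substituting into \eqref{proj} and \eqref{C from proj} gives the two displayed identities verbatim. I do not anticipate any genuine obstacle: the only points requiring a little care are the direction of the projection and the justification that subtracting an element of $\mathrm{lin}~C$ keeps one inside $C$, both of which rest on the invariance $C+\mathrm{lin}~C=C$.
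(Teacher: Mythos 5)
Your proof is correct and takes essentially the same approach as the paper: both arguments rest on the $(\mathrm{lin}~C)$-invariance $C+\mathrm{lin}~C=C$ and the resulting decomposition $C=C\cap U+\mathrm{lin}~C$ from Remark \ref{inv}, the paper obtaining (\ref{proj}) by applying the linear map $\pi_{U}$ to that decomposition while you verify the same two inclusions element-wise. The only (harmless) difference is that you re-derive the invariance from $\mathrm{lin}~C\subseteq 0^{+}C$ and $\mathrm{lin}~C=-\mathrm{lin}~C$ rather than quoting it.
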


\begin{proof}
Equality (\ref{proj}) is an immediate consequence of the decomposition%
\begin{equation}
C=C\cap U+\mathrm{lin}~C,  \label{dec}
\end{equation}%
since $\pi _{U}$ is linear. Equality (\ref{C from proj}) follows from (\ref%
{dec}) and (\ref{proj}).
\end{proof}

\begin{corollary}
\label{preim}Let $C\subseteq \mathbb{R}^{n}$ be a nonempty closed convex set
and $U\subseteq \mathbb{R}^{n}$ be a supplementary subspace to $\mathrm{lin}%
~C$. If $\pi _{U}$ denotes the projection mapping onto $U$ corresponding to
the direct sum decomposition $U\oplus \mathrm{lin}~C=\mathbb{R}^{n}$, then%
\begin{equation*}
\pi _{U}^{-1}\left( \pi _{U}\left( C\right) \right) =C.
\end{equation*}

In particular, if $\pi $ denotes the orthogonal projection mapping onto $(%
\mathrm{lin}~C)^{\perp },$ then%
\begin{equation*}
\pi ^{-1}\left( \pi \left( C\right) \right) =C.
\end{equation*}
\end{corollary}

\begin{proof}
The inclusion $\supseteq $ being obvious, we will only prove the opposite
one. Let $x\in \pi _{U}^{-1}\left( \pi _{U}\left( C\right) \right) .$ Then,
in view of (\ref{C from proj}), we have%
\begin{equation*}
x=\pi _{U}\left( x\right) +x-\pi _{U}\left( x\right) \in \pi _{U}\left(
C\right) +\mathrm{lin}~C=C,
\end{equation*}%
which proves the inclusion $\subseteq .$
\end{proof}

\begin{lemma}
If $C\subseteq \mathbb{R}^{n}$ is a nonempty closed convex set, then%
\begin{equation}
\mathrm{aff}\left( C\cap (\mathrm{lin}~C)^{\perp }\right) =\left( \mathrm{%
aff~}C\right) \cap (\mathrm{lin}~C)^{\perp }  \label{aff}
\end{equation}%
and%
\begin{equation}
\mathrm{rbd}(C\cap (\mathrm{lin}~C)^{\perp })+\mathrm{lin}~C=\mathrm{rbd}~C.
\label{rbd}
\end{equation}
\end{lemma}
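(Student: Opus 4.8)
Throughout the plan write $L:=\mathrm{lin}~C$, and let $\pi$ be the orthogonal projection onto $L^{\perp}$ fixed just before Theorem~\ref{prop221215}. I will freely use two facts already recorded in the excerpt: the preceding lemma gives $\pi(C)=C\cap L^{\perp}$ and $C=\pi(C)+L$, while Corollary~\ref{preim} gives $\pi^{-1}(\pi(C))=C$, i.e. $C=\pi^{-1}(C\cap L^{\perp})$. I will also use the elementary observation that, since the fibres of $\pi$ are exactly the cosets $x+L$, one has $\pi^{-1}(S)=S+L$ for every $S\subseteq L^{\perp}$.

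To prove \eqref{aff} I would establish the two inclusions separately. The inclusion $\subseteq$ is immediate: $C\cap L^{\perp}$ is contained both in the affine set $\mathrm{aff}~C$ and in the subspace $L^{\perp}$, hence in the affine set $(\mathrm{aff}~C)\cap L^{\perp}$, and taking affine hulls yields $\mathrm{aff}(C\cap L^{\perp})\subseteq(\mathrm{aff}~C)\cap L^{\perp}$. For the reverse inclusion I use that the linear map $\pi$ commutes with the affine-hull operator, so that $\pi(\mathrm{aff}~C)=\mathrm{aff}(\pi(C))=\mathrm{aff}(C\cap L^{\perp})$. Then, for any $x\in(\mathrm{aff}~C)\cap L^{\perp}$, membership in $L^{\perp}$ gives $\pi(x)=x$, while membership in $\mathrm{aff}~C$ gives $\pi(x)\in\pi(\mathrm{aff}~C)=\mathrm{aff}(C\cap L^{\perp})$; combining, $x\in\mathrm{aff}(C\cap L^{\perp})$, which is \eqref{aff}.

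For \eqref{rbd} I set $D:=C\cap L^{\perp}$ and transport everything through $\pi$. The key step is the identity $\mathrm{ri}~C=\mathrm{ri}~D+L$. I would obtain it from the formula for the relative interior of a sum, $\mathrm{ri}(A+B)=\mathrm{ri}~A+\mathrm{ri}~B$ \cite[Corollary 6.6.2]{R70}, applied to the decomposition $C=D+L$, together with $\mathrm{ri}~L=L$ (a subspace is its own relative interior); equivalently one may invoke the relative-interior-of-preimage formula for the linear map $\pi$. Since $C$ is closed, $\mathrm{rbd}~C=C\setminus\mathrm{ri}~C$, and likewise $\mathrm{rbd}~D=D\setminus\mathrm{ri}~D$ in $L^{\perp}$. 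Writing $C=\pi^{-1}(D)$ and $\mathrm{ri}~C=\mathrm{ri}~D+L=\pi^{-1}(\mathrm{ri}~D)$, and using the set identity $\pi^{-1}(D)\setminus\pi^{-1}(\mathrm{ri}~D)=\pi^{-1}(D\setminus\mathrm{ri}~D)$, I conclude $\mathrm{rbd}~C=\pi^{-1}(\mathrm{rbd}~D)=\mathrm{rbd}~D+L$, which is \eqref{rbd}.

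The only genuinely non-formal ingredient is the relative-interior identity $\mathrm{ri}~C=\mathrm{ri}(C\cap L^{\perp})+L$; everything else is bookkeeping about affine hulls, fibres of $\pi$, and complementation of set differences under $\pi^{-1}$. The point to check carefully is the hypothesis of the tool invoked for that identity (nonemptiness of $\mathrm{ri}~D$, which holds since $D=\pi(C)$ is a nonempty convex set lying in the range $L^{\perp}$ of $\pi$); once that is secured, both \eqref{aff} and \eqref{rbd} follow as above.
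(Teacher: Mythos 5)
Your proposal is correct, and it splits naturally into two halves. For \eqref{aff} you are taking essentially the paper's own route: the paper writes a point of $(\mathrm{aff}~C)\cap(\mathrm{lin}~C)^{\perp}$ as an explicit affine combination of points of $C$ and applies $\pi$ to it, which is just the concrete form of your observation that a linear map commutes with the affine-hull operator; both versions rest on the identity $\pi(C)=C\cap(\mathrm{lin}~C)^{\perp}$ from the preceding lemma. For \eqref{rbd}, however, your route is genuinely different. The paper argues topologically and in two separate inclusions: for $\supseteq$ it approximates $x\in\mathrm{rbd}~C$ by points $x_{r}\in B(x,r)\cap(\mathrm{aff}~C)\setminus C$, pushes them through the nonexpansive map $\pi$ (with Corollary \ref{preim} guaranteeing that their images stay outside $C\cap(\mathrm{lin}~C)^{\perp}$), and for $\subseteq$ it proves $\mathrm{rbd}\left(C\cap(\mathrm{lin}~C)^{\perp}\right)\subseteq\mathrm{rbd}~C$ by a similar ball argument and then quotes the invariance $\mathrm{rbd}~C+\mathrm{lin}~C=\mathrm{rbd}~C$ from \cite[Proposition 4.6]{MaLe-Pi}. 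You instead obtain the equality in one stroke at the level of relative interiors: $\mathrm{ri}~C=\mathrm{ri}\left(C\cap(\mathrm{lin}~C)^{\perp}\right)+\mathrm{lin}~C$ by the sum formula \cite[Corollary 6.6.2]{R70} applied to $C=\left(C\cap(\mathrm{lin}~C)^{\perp}\right)+\mathrm{lin}~C$, and then, since $C$ and $C\cap(\mathrm{lin}~C)^{\perp}$ are closed and $\pi^{-1}(A\setminus B)=\pi^{-1}(A)\setminus\pi^{-1}(B)$ for any map, pure preimage bookkeeping gives $\mathrm{rbd}~C=\pi^{-1}\left(\mathrm{rbd}\left(C\cap(\mathrm{lin}~C)^{\perp}\right)\right)=\mathrm{rbd}\left(C\cap(\mathrm{lin}~C)^{\perp}\right)+\mathrm{lin}~C$. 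Your approach buys a shorter, inclusion-free argument and removes the dependence on the earlier paper \cite{MaLe-Pi}: the $(\mathrm{lin}~C)$-invariance of $\mathrm{rbd}~C$, cited there, falls out for free because $\mathrm{rbd}~C$ is exhibited as a union of fibres of $\pi$. What the paper's argument buys in exchange is economy of tools: it uses nothing beyond the nonexpansiveness of orthogonal projection and Corollary \ref{preim}, whereas you need the relative-interior calculus of \cite{R70}. Finally, the hypothesis you flag is a non-issue: the sum formula holds for arbitrary nonempty convex sets, and $\mathrm{ri}\left(C\cap(\mathrm{lin}~C)^{\perp}\right)\neq\emptyset$ is automatic since $C\cap(\mathrm{lin}~C)^{\perp}$ is nonempty and convex.
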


\begin{proof}
The inclusion $\subseteq $ in (\ref{aff}) being obvious, we will only prove
the opposite one. To this aim, let $x\in \left( \mathrm{aff~}C\right) \cap (%
\mathrm{lin}~C)^{\perp }.$ Since $x\in \mathrm{aff~}C$, it follows that%
\begin{equation}
x=\lambda _{1}x_{1}+\cdots +\lambda _{m}x_{m}  \label{aff comb 1}
\end{equation}%
for some $\lambda _{1},\ldots ,\lambda _{m}\in \mathbb{R}$ such that $%
\lambda _{1}+\cdots +\lambda _{m}=1$ and some $x_{1},\ldots ,x_{m}\in C.$
Applying $\pi $ to both sides of (\ref{aff comb 1}), we obtain%
\begin{equation*}
x=\lambda _{1}\pi \left( x_{1}\right) +\cdots +\lambda _{m}\pi \left(
x_{m}\right) .
\end{equation*}%
Thus, by Lemma \ref{proj}, we have $x\in \mathrm{aff}\left( C\cap (\mathrm{%
lin}~C)^{\perp }\right) ,$ which completes the proof of the inclusion $%
\supseteq $ in (\ref{aff}).

To prove the inclusion $\supseteq $ in (\ref{rbd}), let $x\in \mathrm{rbd}~C$
and, for $r>0,$ take%
\begin{equation*}
x_{r}\in B(x,r)\cap \left( \mathrm{aff}~C\right) \cap \left( \mathbb{R}%
^{n}\setminus C\right) .
\end{equation*}%
By Corollary (\ref{preim}), we have $\pi \left( x_{r}\right) \notin C;$
hence, since $\pi $ is nonexpansive, using (\ref{aff}) we get%
\begin{eqnarray*}
&&%
\begin{array}{c}
\pi \left( x_{r}\right) \in B(\pi \left( x\right) ,r)\cap \left( \mathrm{aff}%
~C\right) \cap (\mathrm{lin}~C)^{\perp }\cap \left( \mathbb{R}^{n}\setminus
C\right)%
\end{array}
\\
&&%
\begin{array}{c}
\text{\qquad \qquad }=B(\pi \left( x\right) ,r)\cap \left( \mathrm{aff}%
~C\right) \cap (\mathrm{lin}~C)^{\perp }\cap \left( \mathbb{R}^{n}\setminus
\left( C\cap (\mathrm{lin}~C)^{\perp }\right) \right)%
\end{array}
\\
&&%
\begin{array}{c}
\text{\qquad \qquad }=B(\pi \left( x\right) ,r)\cap \mathrm{aff}\left( C\cap
(\mathrm{lin}~C)^{\perp }\right) \cap \left( \mathbb{R}^{n}\setminus \left(
C\cap (\mathrm{lin}~C)^{\perp }\right) \right) .%
\end{array}%
\end{eqnarray*}%
This shows that $\pi \left( x\right) \in \mathrm{rbd}(C\cap (\mathrm{lin}%
~C)^{\perp }),$ from which we deduce that%
\begin{equation*}
x=\pi \left( x\right) +x-\pi \left( x\right) \in \mathrm{rbd}(C\cap (\mathrm{%
lin}~C)^{\perp })+\mathrm{lin}~C,
\end{equation*}%
as was to be proved.

To prove the inclusion $\subseteq $ in (\ref{rbd}), it suffices to prove that%
\begin{equation}
\mathrm{rbd}(C\cap (\mathrm{lin}~C)^{\perp })\subseteq \mathrm{rbd}~C,
\label{rbd sect}
\end{equation}%
since $\mathrm{rbd}~C+\mathrm{lin}~C=\mathrm{rbd}~C$ in view of \cite[%
Proposition 4.6]{MaLe-Pi}. To see that (\ref{rbd sect}) holds, just observe
that, for every $x\in \mathrm{rbd}(C\cap (\mathrm{lin}~C)^{\perp })$ and $%
r>0,$ using (\ref{aff}) one obtains%
\begin{eqnarray*}
&&%
\begin{array}{c}
B(x,r)\cap \left( \mathrm{aff}~C\right) \cap \left( \mathbb{R}^{n}\setminus
C\right)%
\end{array}
\\
&&%
\begin{array}{c}
\text{\qquad }\supseteq B(x,r)\cap \left( \mathrm{aff}~C\right) \cap (%
\mathrm{lin}~C)^{\perp }\cap \left( \mathbb{R}^{n}\setminus C\right)%
\end{array}
\\
&&%
\begin{array}{c}
\text{\qquad }=B(x,r)\cap \left( \mathrm{aff}~C\right) \cap (\mathrm{lin}%
~C)^{\perp }\cap \left( \left( \mathbb{R}^{n}\setminus \left( C\cap (\mathrm{%
lin}~C)^{\perp }\right) \right) \right)%
\end{array}
\\
&&%
\begin{array}{c}
\text{\qquad }=B(x,r)\cap \mathrm{aff}\left( C\cap (\mathrm{lin}~C)^{\perp
}\right) \cap \left( \left( \mathbb{R}^{n}\setminus \left( C\cap (\mathrm{lin%
}~C)^{\perp }\right) \right) \right) \neq \emptyset .%
\end{array}%
\end{eqnarray*}%
Now, from the inclusion (\ref{rbd sect}), we deduce that 
\begin{equation*}
\mathrm{rbd}(C\cap (\mathrm{lin}~C)^{\perp })+\mathrm{lin}~C\subseteq 
\mathrm{rbd}~C+\mathrm{lin}~C=\mathrm{rbd}~C,
\end{equation*}%
the latter equality being due to \cite[Proposition 4.6]{MaLe-Pi}.
\end{proof}

\bigskip

\begin{proof}[The proof of Theorem \protect\ref{prop221215}]
By using (\ref{rbd}) and \cite[Proposition 4.5]{MaLe-Pi}, the hypothesis $%
M(C)=\mathrm{rbd}~C$ can be rewritten as 
\begin{equation*}
M(C\cap (\mathrm{lin}~C)^{\perp })+\mathrm{lin}~C=\mathrm{rbd}~(C\cap (%
\mathrm{lin}~C)^{\perp })+\mathrm{lin}~C.
\end{equation*}%
From this inequality, taking into account that the sets $M(C\cap (\mathrm{lin%
}~C)^{\perp })$ and $\mathrm{rbd}~(C\cap (\mathrm{lin}~C)^{\perp })$ are
contained in $(\mathrm{lin}~C)^{\perp },$ we obtain%
\begin{eqnarray*}
M(C\cap (\mathrm{lin}~C)^{\perp }) &=&\left( M(C\cap (\mathrm{lin}~C)^{\perp
})+\mathrm{lin}~C\right) \cap \mathrm{lin}~C)^{\perp } \\
&=&\left( \mathrm{rbd}(C\cap (\mathrm{lin}~C)^{\perp })+\mathrm{lin}%
~C\right) \cap \mathrm{lin}~C)^{\perp } \\
&=&\mathrm{rbd}~(C\cap (\mathrm{lin}~C)^{\perp }).
\end{eqnarray*}%
Observe now that the assumption that $C$ is not an affine variety is
equivalent to the nonemptiness of $\mathrm{rbd}~C,$ since $\mathrm{aff}~C$
is connected. Then, by using \cite[Proposition 4.8]{MaLe-Pi} and Theorem \ref%
{th24.02.15.1}, the statement follows immediately.
\end{proof}

\end{document}